\documentclass[aps,pre,reprint,amsmath,amssymb,superscriptaddress,floatfix]{revtex4-2}

\usepackage[T1]{fontenc}
\usepackage{lmodern}
\usepackage{amsthm}
\usepackage{graphicx}
\usepackage{booktabs}
\usepackage{multirow}
\usepackage{xcolor}
\usepackage{tikz}
\usetikzlibrary{arrows.meta,positioning,calc}
\usepackage[hidelinks]{hyperref}

\newtheorem{proposition}{Proposition}
\newtheorem{corollary}{Corollary}
\theoremstyle{remark}
\newtheorem{remark}{Remark}
\theoremstyle{definition}

\newcommand{\Tst}{T_{\mathrm{st}}}
\newcommand{\Tpop}{T_{\mathrm{pop}}}
\newcommand{\Tcol}{T_{\mathrm{col}}}
\newcommand{\Tgeo}{T_{\mathrm{geo}}}
\newcommand{\TG}{T_{\Gamma}}
\newcommand{\Tdiv}{T_{\mathrm{div}}}
\newcommand{\Z}{\mathbb{Z}}
\newcommand{\prelim}[1]{#1}
\newcommand{\prelimL}[1]{#1}

\begin{document}

\title{Symmetry-resolved periods in a cyclic-dominance automaton: separating intrinsic from lattice-induced periodicity}

\author{Francisco J. Mu\~noz}
\email{fcomunoz@alumnos.upm.es}
\affiliation{Departamento de Matem\'atica Aplicada, Universidad Polit\'ecnica de Madrid, 28040 Madrid, Spain}
\author{Juan Carlos Nu\~no}
\affiliation{Departamento de Matem\'atica Aplicada, Universidad Polit\'ecnica de Madrid, 28040 Madrid, Spain}

\begin{abstract}
Periodic attractors of spatially extended systems possess not one period but a
hierarchy of periods, one for each level of observation and symmetry quotient,
and this hierarchy separates how a pattern evolves from how it fits the finite
domain. For a synchronous threshold model of
cyclic dominance on an $L\times L$ torus, equivalent to the three-color cyclic
cellular automaton of Fisch, Gravner and Griffeath, we measure for every
attractor a \emph{tuple} of periods: that of the populations ($\Tpop$), of the
full configuration ($\Tst$), and of the configuration up to color rotations
($\Tcol$), lattice symmetries ($\Tgeo$) and the full space--color group ($\TG$).
Equivariance implies exact divisibility relations and the factorization
$\Tst=\TG\,m$, where $m$ counts the applications of the symmetry $h$ that maps
the configuration onto itself after $\TG$ steps: $\TG$ and the type of $h$
belong to the pattern, $m$ to the lattice, and the attractors are relative
periodic orbits whose full period emerges from the interaction between the
intrinsic dynamics and the lattice symmetries. A family of attractors built around a
pair of counter-rotating spirals, with $\Tpop=24$ and $\TG=8$, persists from
$L=9$ to $L=40$ while $\Tst$ takes values from 24 to 960, given in every case
by the order of $h$; on a rectangular $9\times12$ torus it takes the value 48,
absent from all square tori. With more than $4.5\times10^5$ runs at each of
$L=8$ and $L=9$ we estimate the basins of attraction: fixed points take
95--97\% of the configuration space, and no single period is enough to tell
the oscillating attractors apart. The same analysis locates local diversity: it is
balanced whenever the cycle has a color symmetry and, on large lattices, it is
concentrated at the spiral cores.
\end{abstract}

\maketitle

\section{Introduction}
\label{sec:intro}

Cyclic dominance, in which three or more species or opinions beat each other in
a non-transitive way, is one of the simplest mechanisms able to sustain
diversity in spatially extended populations
\cite{Kerr2002,Reichenbach2007,Szolnoki2014,SzaboFath2007}. Lattice versions of
the rock--paper--scissors game have been studied for decades, both in their
stochastic form \cite{Tainaka1988,Frean2001,Reichenbach2007} and as deterministic
cellular automata \cite{Greenberg1978,Bramson1989,Fisch1991a,Fisch1991b}. In the
latter, the dynamics is a map on a finite set of configurations, so that every
trajectory eventually falls onto a periodic orbit (a fixed point being a cycle of
period one) \cite{Wolfram1983,Martin1984}. Threshold rules in which an agent
changes state when a given fraction of its neighbors is different have, in
turn, a long tradition in the social sciences \cite{Schelling1971,Granovetter1978},
and for symmetric (non-cyclic) threshold networks it is known that synchronous
dynamics can only produce fixed points or 2-cycles \cite{Goles1980}. The cyclic
orientation of the interaction breaks this bound and allows long cycles.

Periodicity has been a central theme in the study of cyclic cellular
automata. For the cyclic cellular automaton (CCA) on $\Z^2$, Fisch, Gravner and
Griffeath proved local periodicity: in the spiral-forming regimes every site is
eventually periodic \cite{Fisch1991b,Fisch1991a}, and the asymptotic behavior of excitable
automata has been characterized further in \cite{Durrett1991,Durrett1993}. On
finite graphs, Matamala and Goles showed that cyclic automata networks can have
non-polynomial periods, with bounds that depend on the structure of the graph
\cite{Matamala1997}, and Matamala and Moreno proved that, for finite
configurations on $\Z^2$, the period of every site divides the least common
multiple of the lengths of a finite family of cycles \cite{Matamala2004}. For
the Greenberg--Hastings model on finite graphs, whether the dynamics dies out or
becomes periodic, and the asymptotic firing rate, are determined by the phase
differences (winding numbers) around the cycles of the graph
\cite{Doba2018,Cai2018}. More recent numerical work has measured the duration
of the transient phases of the CCA on large tori \cite{Tomlinson2019}. These
studies concern the period of individual sites, bounds on the global period, or
the convergence to periodicity; to our knowledge, none of them compares the
periods obtained at different levels of description of the same attractor
(aggregated populations, full configuration, configuration up to color
relabelling, configuration up to lattice symmetries), nor uses the
relations between them to classify attractors, which is the purpose of this
work.

A central observation motivating this work is that the notion of period is not
unique in spatially extended cyclic systems: the same attractor may show
different periodicities depending on whether one observes the full state, the
pattern of domains irrespective of colors, the pattern up to its position and
orientation, or the population counts. How should a periodic attractor be
characterized, then? The usual choice is a single
number: either the period of the aggregated densities, which is what one
observes in population data, or the period of the complete microscopic
configuration. The two need not coincide, and neither alone tells us
\emph{how} the configuration comes back to itself. A pattern that travels
rigidly across the torus, for instance, has a long microscopic period but
completely flat densities; conversely, a pattern that returns to itself
with the colors permuted shows densities that oscillate with three times the
period of its ``shape''.

In this work we measure, for every attractor, a \emph{tuple of periods}
obtained by comparing configurations at different levels of coarse-graining:
the populations, the full configuration, the configuration without color labels
and the configuration up to a geometric transformation of the lattice. We show
that these periods are linked by exact divisibility relations derived from the
symmetry group of the dynamics, in the spirit of the $H/K$ theory of
spatio-temporal symmetries of periodic orbits \cite{Golubitsky2002}, and that
their ratios identify the space--color transformation that maps the attractor
onto itself after a fraction of its period. Since initial conditions are drawn
uniformly, the frequency of each tuple in a census of random runs also
estimates the basins of attraction.

The central idea of the paper is that the symmetry group separates the
properties of a pattern from those imposed by the domain. The period up to the
full group, $\TG$, and the type of the generating symmetry $h$ belong to the
pattern; the state period is $\Tst=\TG\,m$, where $m$, the number of
applications of $h$ needed to return, depends on the size and shape of the torus.
A family of spiral attractors with $\Tpop=24$ and $\TG=8$ illustrates this: it
appears at $L=9$ and persists up to $L=40$ with the same $\Tpop$ and $\TG$, while
its state period takes values between 24 and 960, and on a rectangular torus it
takes a value found on no square one (Secs.~\ref{sec:t24} and \ref{sec:tests}).
The same analysis bears on a classical question about cyclic dominance, namely
where local diversity (the coexistence of the three states within a
neighborhood) survives \cite{Kerr2002,Reichenbach2007}: the diversity counts are invariant under the full group, a color
symmetry forces balanced coexistence, and the spiral defects guarantee local
diversity at their cores (Sec.~\ref{sec:diversity}).

We study in detail $L=8$ and $L=9$, small enough to sample the basins with high
precision and to test every element of the symmetry group on every attractor,
and differing in one relevant respect: 9 is a multiple of three, which allows
symmetries that combine a translation with a color rotation. Larger square tori,
up to $L=40$, and a rectangular torus show how the picture changes with size.
The main contributions are: (i) a hierarchy of periods defined by quotients of
the configuration space by subgroups of the symmetry group; (ii) exact
divisibility relations among them and the factorization $\Tst=\TG\,m$, which
identifies the attractors as relative periodic orbits and separates intrinsic
from lattice-induced periodicity; (iii) a family of spiral attractors with
$\Tpop=24$ that demonstrates this separation over 32 square tori and a
rectangular one; (iv) a link between the symmetries of an attractor and the
local diversity it sustains; and (v) a census of the attractors and their basins
based on more than $10^7$ runs.
Section~\ref{sec:model} defines the model, Sec.~\ref{sec:periods} the tuple
of periods and its relations, Sec.~\ref{sec:methods} the simulations,
Sec.~\ref{sec:results} presents the results and Sec.~\ref{sec:discussion}
discusses them.

\section{Model}
\label{sec:model}

We consider the square lattice $\Lambda=\Z_L\times\Z_L$ with periodic boundary
conditions (a torus) and $N=L^2$ sites. Each site $i$ carries a state
$s_t(i)\in\{\mathrm{X},\mathrm{Y},\mathrm{Z}\}$, which we identify with
$\{0,1,2\}$. The neighborhood $\mathcal{N}_r(i)$ of a site is its Moore
neighborhood of radius $r$ (all sites at Chebyshev distance $1\le d\le r$),
with $k_r=|\mathcal{N}_r(i)|=(2r+1)^2-1$ neighbors when $2r+1\le L$.
The interaction is cyclic: Y dominates X, Z dominates Y and X dominates Z. We
call the dominating state of $s$ its \emph{predator}, $s^{+}=s+1 \pmod 3$.

All sites are updated synchronously according to the deterministic rule
\begin{equation}
s_{t+1}(i)=
\begin{cases}
s_t(i)+1 \pmod 3, & \text{if } \phi_t(i)\ge\alpha,\\[1mm]
s_t(i), & \text{otherwise},
\end{cases}
\label{eq:rule}
\end{equation}
where
\begin{equation}
\phi_t(i)=\frac{1}{k_r}\,\bigl|\{j\in\mathcal{N}_r(i): s_t(j)=s_t(i)+1\}\bigr|
\end{equation}
is the fraction of neighbors of $i$ in the predator state and $\alpha\in[0,1]$ is the conversion threshold. A site therefore changes
only to its predator and only when the predator is sufficiently represented in
its neighborhood. For $\alpha=0$ every site changes at every step and the
whole lattice rotates its colors, while for $\alpha=1$ a site changes only
when it is completely surrounded by its predator. Since only the integer
$\lceil\alpha k_r\rceil$ matters, the rule has $k_r+1$ distinct regimes; for
$r=1$ these are $\alpha=m/8$, $m=0,\dots,8$, which are the values used in the
$\alpha$ sweeps. We write $F$ for the global map
$s_t\mapsto s_{t+1}$.

In the language of excitable media, Eq.~\eqref{eq:rule} is the
three-color cyclic cellular automaton with box neighborhood of range $r$ and
threshold $\theta=\lceil\alpha k_r\rceil$ studied by Fisch, Gravner and
Griffeath \cite{Fisch1991a,Fisch1991b}, with $\kappa=3$ colors, in which a
site advances when at least $\theta$ sites within range $\rho=r$ have the
successor color. In their notation the reference case $\alpha=1/3$, $r=1$ is
the rule $(\rho,\theta,\kappa)=(1\mathrm{B},3,3)$. Two of their thresholds are
directly relevant here. The \emph{band test} shows that a straight band of the
successor color propagates when $\theta\le(2\rho+1)\rho$, i.e.\ $\theta\le3$
for $\rho=1$; and above the bootstrap threshold, which for $\rho=1$ lies between
$\theta=3$ and $\theta=4$, random initial conditions fixate. Our reference
threshold is therefore the largest one that still supports traveling bands,
and we expect fixation for all $\alpha>3/8$ ($\theta\ge4$) when $r=1$; note also
that $\alpha=1/3$ and $\alpha=3/8$ define the same rule for $r=1$. Those works
focus on large lattices, where for $(1\mathrm{B},3,3)$ they report locally
periodic equilibria of spirals, and on the transitions between debris, spirals,
turbulence and fixation; here we focus on small tori, where the attractors
reached from random initial conditions can be sampled exhaustively and their
periods measured exactly.

Initial conditions are drawn uniformly at random with prescribed numbers of
sites in each state, $n_\mathrm{X}(0)=n_\mathrm{Y}(0)=P_x$ and
$n_\mathrm{Z}(0)=N-2P_x$; the default is $P_x=\mathrm{round}(N/3)$.

\section{A tuple of periods}
\label{sec:periods}

\subsection{Symmetries of the dynamics}

Two groups act on configurations. The \emph{geometric} group
$G=\Z_L^2\rtimes D_4$ contains the $L^2$ translations of the torus and the eight
rotations and reflections of the square; since the Moore neighborhood is
invariant under all of them, $F(gs)=gF(s)$ for all $g\in G$. The \emph{color}
group $C_3=\{e,\sigma,\sigma^2\}$, with $(\sigma s)(i)=s(i)+1 \pmod 3$, also
commutes with $F$, because the rule~\eqref{eq:rule} treats the three states in
the same cyclic way. Color transpositions (e.g.\ exchanging X and Y) do
\emph{not} commute with $F$: they reverse the orientation of the cycle. The full
symmetry group of the model is therefore $\Gamma=G\times C_3$.

\subsection{Definitions}

Let $s_0,s_1,\dots,s_{\Tst-1}$ be a periodic orbit of $F$ with (minimal) period
$\Tst$, which we call the \emph{state period}. For any subgroup $H\le\Gamma$
we define the period of the orbit \emph{up to $H$},
\begin{equation}
T_H=\min\{k\ge1:\ \exists\,h\in H \text{ such that } s_{t+k}=h\,s_t\}.
\label{eq:TH}
\end{equation}
Because $F$ commutes with $H$, the definition does not depend on $t$. We use
\begin{itemize}
\item $\Tst=T_{\{e\}}$, the period of the full configuration;
\item $\Tcol=T_{C_3}$, the period of the configuration ``without color''
(the \emph{uncolor} period): the first return of the pattern of domains,
irrespective of which state occupies each domain;
\item $\Tgeo=T_{G}$, the \emph{geometric} period: the first time at which the
configuration is a translated, rotated or reflected copy of itself;
\item $\TG=T_{\Gamma}$, the period up to any combination of geometric and
color transformations.
\end{itemize}
Since $F$ commutes with every element of $\Gamma$, it induces a well-defined map
on the quotient space $\{\mathrm{X},\mathrm{Y},\mathrm{Z}\}^N/\Gamma$, whose
points are the orbits $\Gamma s$, that is, configurations considered up to
translations, rotations, reflections and cyclic exchange of the colors. $\TG$ is
the period of the orbit of $\Gamma s_t$ under this reduced dynamics: it measures
how long the pattern takes to recover its shape, irrespective of where it sits on
the torus and of which species occupies each domain.
In addition we consider scalar observables. The \emph{population period}
$\Tpop$ is the least common multiple of the periods of $n_\mathrm{X}(t)$,
$n_\mathrm{Y}(t)$ and $n_\mathrm{Z}(t)$, the number of sites in each state.
At the opposite, local, end the \emph{site period} $T_i$ is the period of the
state series $s_t(i)$ of a single site $i$. Since a configuration returns exactly
when every site does, $\mathrm{lcm}_i\,T_i=\Tst$. Because a site can only move
along $\mathrm{X}\to\mathrm{Y}\to\mathrm{Z}\to\mathrm{X}$, over one state
period it completes an integer number of turns $\nu_i$, and $3\nu_i/\Tst$ is its
mean rotation frequency (color changes per step), the local quantity used in the
theory of cyclic cellular automata \cite{Fisch1991b}. If $s_{t+\TG}=h\,s_t$ with
$h=\sigma^c g$, $g\in G$, then $s_{t+\TG}(i)=\sigma^c s_t(g^{-1}i)$, so sites
related by the geometric part of $h$ have the same period.
The diversity counts $n_m(t)$, $m=1,2,3$, the number of sites whose closed
neighborhood contains exactly $m$ distinct states, define a diversity period
$\Tdiv$.

\subsection{Divisibility relations}

\begin{proposition}\label{prop:subgroup}
For every subgroup $H\le\Gamma$, the set
$K_H=\{k\in\Z:\exists\,h\in H,\ s_{t+k}=hs_t\}$ is a subgroup of $\Z$ that
contains $\Tst$. Consequently $K_H=T_H\Z$ and $T_H$ divides $\Tst$.
Moreover, if $H_1\le H_2$ then $T_{H_2}$ divides $T_{H_1}$.
\end{proposition}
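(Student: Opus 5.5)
The plan is to verify the subgroup axioms for $K_H$ directly, using only that $F$ commutes with every element of $\Gamma$ and that the orbit is periodic. Since the orbit is periodic, the index $t$ can be read modulo $\Tst$, so $s_{t+k}$ makes sense for every $k\in\Z$, including negative $k$. I also fix $t$, and need to know that membership $k\in K_H$ does not depend on this choice. If $s_{t+k}=hs_t$, then applying $F^j$ and using $F^j h = hF^j$ gives $s_{t+j+k}=hs_{t+j}$ for all $j\ge0$. By periodicity the same then holds for every $t$, with the same $h$.

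The subgroup axioms follow in order.
\begin{enumerate}
\item Identity and $\Tst$: $0\in K_H$ and $\Tst\in K_H$, both with $h=e$, since $s_{t+\Tst}=s_t$.
\item Closure under addition: if $s_{t+k_1}=h_1s_t$ and $s_{t+k_2}=h_2s_t$ for all $t$, then
\[
s_{t+k_1+k_2}=h_1s_{t+k_2}=h_1h_2s_t,
\]
and $h_1h_2\in H$ because $H$ is a subgroup.
\item Inverses: from $s_{t+k}=hs_t$ for all $t$, substitute $t\mapsto t-k$ to get $s_t=hs_{t-k}$. Hence $s_{t-k}=h^{-1}s_t$, and $h^{-1}\in H$, so $-k\in K_H$.
\end{enumerate}
Alternatively, one can avoid negative indices: $-k\equiv(\Tst-1)k \pmod{\Tst}$, so $-k\in K_H$ follows from closure under addition together with $\Tst\in K_H$.

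With the axioms in place, every subgroup of $\Z$ has the form $d\Z$, where $d$ is its least positive element. The least positive element of $K_H$ is $T_H$ by definition~\eqref{eq:TH}, and it exists because $\Tst\in K_H$ is positive. Hence $K_H=T_H\Z$, and $\Tst\in K_H$ gives $T_H\mid\Tst$.

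For the monotonicity claim, if $H_1\le H_2$ then clearly $K_{H_1}\subseteq K_{H_2}$. So $T_{H_1}\in T_{H_2}\Z$, that is, $T_{H_2}\mid T_{H_1}$.

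I expect the only point needing care to be the independence from $t$, together with the meaning of negative shifts. Both rest on the equivariance $F h=hF$ and on the orbit being periodic, so that $F$ acts bijectively on it. Everything else is the standard structure theorem for subgroups of $\Z$.
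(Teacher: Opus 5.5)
Your proof is correct and follows the same outline as the paper's: closure of $K_H$ under addition via equivariance, closure under negation, then the structure of subgroups of $\Z$ and the inclusion $K_{H_1}\subseteq K_{H_2}$. The one difference is in the negation step: your main argument makes the $t$-independence explicit and uses $h^{-1}\in H$, whereas the paper gets negation from periodicity ($-k\equiv \Tst-k \pmod{\Tst}$), which is essentially your alternative remark.
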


\begin{proof}
Clearly $\Tst\in K_H$ (take $h=e$). If $s_{t+k_1}=h_1s_t$ and
$s_{t+k_2}=h_2s_t$, then, using $F^{k}h=hF^{k}$,
$s_{t+k_1+k_2}=F^{k_1}(h_2s_t)=h_2s_{t+k_1}=h_2h_1s_t$, so $K_H$ is closed under
addition. Since the orbit is periodic, $-k\equiv \Tst-k$ modulo $\Tst$, and
$K_H$ is closed under negation. A subgroup of $\Z$ is of the form $T\Z$ with
$T$ its smallest positive element. Finally $H_1\le H_2$ implies
$K_{H_1}\subseteq K_{H_2}$.
\end{proof}

\begin{corollary}\label{cor:lattice}
For every periodic attractor:
\begin{enumerate}
\item[(a)] $\TG \mid \Tgeo \mid \Tst$ and $\TG \mid \Tcol \mid \Tst$;
\item[(b)] $\Tst/\Tcol\in\{1,3\}$;
\item[(c)] $\Tpop \mid \Tgeo$ and $\Tpop\mid 3\,\Tcol$; the period of the
\emph{sorted} populations divides $\TG$;
\item[(d)] $\Tdiv\mid\TG$.
\end{enumerate}
\end{corollary}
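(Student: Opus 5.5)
The plan is to derive every item from Proposition~\ref{prop:subgroup}, applied to the chain of subgroups $\{e\}\le C_3\le\Gamma$ and $\{e\}\le G\le\Gamma$, together with the observation that each scalar observable is invariant under a suitable subgroup. The general principle is the following. Suppose an observable $O(s)$ satisfies $O(hs)=O(s)$ for all $h\in H$. Then $k\in K_H$ gives $O(s_{t+k})=O(hs_t)=O(s_t)$ for every $t$, because $h$ is the same for all $t$ (equivariance, as noted after \eqref{eq:TH}). Hence $k$ is a period of $O(s_t)$, and since $K_H=T_H\Z$, the minimal period of $O$ divides $T_H$.

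\textbf{Part (a).} This is immediate from Proposition~\ref{prop:subgroup}. Use the inclusions $\{e\}\le G\le\Gamma$ and $\{e\}\le C_3\le\Gamma$, and recall that $T_{\{e\}}=\Tst$.

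\textbf{Part (b).} Let $s_{t+\Tcol}=\sigma^c s_t$. Iterating three times and using that $\sigma$ commutes with $F$ gives $s_{t+3\Tcol}=\sigma^{3c}s_t=s_t$, so $\Tst\mid 3\Tcol$. Part (a) gives $\Tcol\mid\Tst$, so $\Tst/\Tcol$ is a divisor of $3$, that is, $1$ or $3$.

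\textbf{Part (c).} The populations $n_\mathrm{X},n_\mathrm{Y},n_\mathrm{Z}$ are invariant under every $g\in G$, because $g$ permutes sites. By the principle, each of them has a period dividing $\Tgeo$, and therefore so does their lcm $\Tpop$. For the second claim, $3\Tcol\in K_{\{e\}}$ by (b), so each population has a period dividing $3\Tcol$, and hence $\Tpop\mid 3\Tcol$. For the sorted populations, note that $\sigma$ permutes the three counts cyclically and $G$ fixes them. The sorted vector is therefore $\Gamma$-invariant, and its period divides $\TG$.

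\textbf{Part (d).} Let $h=\sigma^c g$ with $g\in G$. The number of distinct states in the closed neighborhood of a site is unchanged by a color bijection. It is also unchanged by $g$, because $g$ maps closed Moore neighborhoods onto closed Moore neighborhoods (this holds for the $D_4$ elements as well as for translations), so $g$ only permutes the sites. Hence each $n_m$ is $\Gamma$-invariant, and $\Tdiv\mid\TG$.

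The only step needing care is the one behind the principle: the same $h$ must work at every $t$. This follows from $F^kh=hF^k$ and is already used in Proposition~\ref{prop:subgroup}, so I expect no real obstacle. The only point to state explicitly is the invariance of the Moore neighborhood under $G$, which is used in (d).
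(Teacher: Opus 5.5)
Your proof is correct and follows the paper's argument essentially step by step. Part (a) comes from Proposition~\ref{prop:subgroup} on the two subgroup chains, (b) from iterating the color rotation three times, and (c)--(d) from the $G$-invariance of the populations and the $\Gamma$-invariance of the sorted populations and diversity counts; the only minor variation is that you obtain $\Tpop\mid 3\Tcol$ from $\Tst\mid 3\Tcol$ rather than from the cyclic permutation of the populations under $\sigma$, which is equivalent.
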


\begin{proof}
(a) follows from Proposition~\ref{prop:subgroup} with $\{e\}\le G\le\Gamma$ and
$\{e\}\le C_3\le\Gamma$. (b) If $s_{t+\Tcol}=\sigma^a s_t$ then
$s_{t+3\Tcol}=\sigma^{3a}s_t=s_t$, so $\Tst\mid 3\Tcol$; combined with
$\Tcol\mid\Tst$ the ratio is $1$ or $3$. (c) Populations are invariant under
$G$, so $s_{t+\Tgeo}=gs_t$ implies $n_c(t+\Tgeo)=n_c(t)$; under $\sigma$ the
populations are cyclically permuted, so they return after at most three
applications; the multiset of populations is invariant under the whole of
$\Gamma$. (d) The number of distinct states in a neighborhood is invariant
under $G$ and under color permutations.
\end{proof}

The relations are summarized in Fig.~\ref{fig:lattice}. Their content is that
the ratios between the periods are not arbitrary: $\Tst/\Tgeo$ is the order of
the geometric transformation $g$ that maps the attractor onto itself after
$\Tgeo$ steps (e.g.\ $L/\gcd(L,v)$ for a translation by $v$ sites along an
axis), $\Tst/\Tcol$ tells whether the cycle is a color-rotating one, and
$\Tgeo/\Tpop$ measures how many geometric ``shapes'' share identical
population vectors. Each period therefore answers a different question:
\begin{itemize}
\item $\Tpop$: what would be seen in aggregated (census) data;
\item $\Tst$: when does the system return exactly to the same microstate;
\item $\Tcol$: when does the pattern of domains return, regardless of who
occupies them (does the cycle consist of three copies of one pattern with
the colors rotated?);
\item $\Tgeo$: when does the same pattern return somewhere else or in another
orientation (is the cycle a moving or rotating object?);
\item $\TG$: the length of the elementary ``motif'' of the cycle once all
symmetries are factored out.
\end{itemize}

The last relation can be made precise. It shows how the state period splits into
a part that belongs to the pattern and a part that depends on the lattice.

\begin{proposition}\label{prop:factor}
Let $h\in\Gamma$ be such that $s_{t+\TG}=h\,s_t$, and let
$m=\min\{j\ge1:\ h^j s_t=s_t\}$. Then $s_{t+k\TG}=h^k s_t$ for all $k$, and
$\Tst=\TG\,m$. The number $m$ divides the order of $h$, and equals it when
$s_t$ has no nontrivial symmetry in $\Gamma$. For $h=\sigma^c\circ\tau_v$, a color
rotation composed with a translation by $v=(v_1,v_2)$,
\begin{equation}
\mathrm{ord}(h)=\mathrm{lcm}\!\left(\frac{3}{\gcd(3,c)},\ \frac{L}{\gcd(L,v_1,v_2)}\right).
\label{eq:order}
\end{equation}
\end{proposition}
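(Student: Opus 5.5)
We would prove the four claims in the order stated, reusing the commutation argument from the proof of Proposition~\ref{prop:subgroup}. First, the identity $s_{t+k\TG}=h^k s_t$ follows by induction on $k\ge0$. Assuming it for $k$, we get $s_{t+(k+1)\TG}=F^{\TG}(h^k s_t)=h^k F^{\TG}s_t=h^k h\,s_t=h^{k+1}s_t$, because every element of $\Gamma$ commutes with $F$. Negative $k$ are handled through periodicity: since $h$ is invertible, $s_t=h^{-1}s_{t+\TG}$, so the identity extends to all $k\in\Z$. One point needs care. The set $\{j\ge1: h^j s_t=s_t\}$ does not depend on $t$, because $h^j s_{t'}=h^jF^{t'-t}s_t=F^{t'-t}h^j s_t$. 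Hence $m$ is well defined.

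For the factorization $\Tst=\TG\,m$, first note that $h^m s_t=s_t$ gives $s_{t+m\TG}=s_t$, so $\Tst\mid m\TG$. Conversely, by Proposition~\ref{prop:subgroup} we have $\Tst\in K_\Gamma=\TG\Z$, so $\Tst=j\TG$ for some $j\ge1$. Then $s_t=s_{t+j\TG}=h^j s_t$, and minimality of $m$ gives $j\ge m$. Together, $\Tst=m\TG$.

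For the divisibility claim, let $\mathrm{Stab}(s_t)\le\Gamma$ be the stabilizer of $s_t$. The set $\{j\in\Z: h^j\in\mathrm{Stab}(s_t)\}$ is a subgroup of $\Z$ containing $\mathrm{ord}(h)$. Its positive generator is $m$, so $m\mid\mathrm{ord}(h)$. If the stabilizer is trivial, then $h^m=e$, so $m=\mathrm{ord}(h)$.

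For the order formula, $\sigma^c$ commutes with $\tau_v$ because $\Gamma=G\times C_3$, and therefore $h^j=\sigma^{jc}\tau_{jv}$. This product is the identity exactly when both factors are. We have $\sigma^{jc}=e$ iff $3\mid jc$, that is, iff $3/\gcd(3,c)$ divides $j$. We have $\tau_{jv}=e$ iff $L\mid jv_1$ and $L\mid jv_2$, which holds iff $L/\gcd(L,v_1)$ and $L/\gcd(L,v_2)$ both divide $j$. The least common multiple of these two numbers equals $L/\gcd(L,v_1,v_2)$. One checks this by comparing $p$-adic valuations, or by noting that $\tau_v$ generates a cyclic subgroup of $\Z_L^2$ whose order is $L/\gcd(L,v_1,v_2)$. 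Taking the least common multiple of the color and translation conditions gives Eq.~\eqref{eq:order}.

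We expect the main obstacle to be the bookkeeping rather than any deep step: making sure $m$ is independent of $t$ and that the choice of $h$ does not matter. If $h$ and $h'$ both realize $\TG$, then $h'h^{-1}$ stabilizes $s_{t+\TG}$. The proof never needs uniqueness of $h$, because $\Tst=\TG m$ is established for whichever admissible $h$ is fixed, so $m$ is in fact independent of that choice.
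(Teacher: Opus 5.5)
Your proof is correct and follows the paper's argument step for step. Like the paper, you use induction with $F^{\TG}h^k=h^kF^{\TG}$, the fact $\Tst\in\TG\Z$ from Proposition~\ref{prop:subgroup}, the stabilizer of $s_t$ for $m\mid\mathrm{ord}(h)$, and the commutation of $\sigma$ with translations for the order formula, while also filling in points the paper leaves implicit: the subgroup $\{j: h^j\in\mathrm{Stab}(s_t)\}$ behind $m\mid\mathrm{ord}(h)$, the computation $\mathrm{ord}(\tau_v)=L/\gcd(L,v_1,v_2)$, and the independence of $m$ from $t$ and from the choice of $h$.
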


\begin{proof}
By induction, $s_{t+(k+1)\TG}=F^{\TG}(h^k s_t)=h^kF^{\TG}s_t=h^{k+1}s_t$. By
Proposition~\ref{prop:subgroup}, $\Tst$ is a multiple of $\TG$, say $\Tst=\TG\,j$,
and $s_{t+\TG j}=s_t$ holds if and only if $h^js_t=s_t$, so the smallest such $j$
is $m$. Since $h^{\mathrm{ord}(h)}=e$, $m$ divides $\mathrm{ord}(h)$, with equality
when the stabilizer of $s_t$ is trivial. Finally, $\sigma$ commutes with the
translations, so the order of $\sigma^c\circ\tau_v$ is the least common multiple
of the order of $\sigma^c$ and of the order of $\tau_v$ in $\Z_L^2$.
\end{proof}

The factorization $\Tst=\TG\,m$ separates two kinds of information. $\TG$ and
the \emph{type} of $h$ (whether it rotates the colors, the length and
direction of its translation, whether it rotates or reflects the lattice) can be
read off from a single stretch of $\TG$ steps of the pattern, and do not depend
on how large the domain is. The factor $m$, instead, depends on $L$ through the
order of the translation in $\Z_L^2$, Eq.~\eqref{eq:order}. A pattern that
reappears with the same $\TG$ and the same kind of $h$ on lattices of different
sizes will therefore keep $\TG$ (and, as we shall see, $\Tpop$), while its state
period changes with $L$ in a way predicted by Eq.~\eqref{eq:order}. In this
sense $\TG$, the period of the reduced dynamics on $\{\mathrm{X},\mathrm{Y},\mathrm{Z}\}^N/\Gamma$,
is an \emph{intrinsic} period and $\Tst$ a \emph{lattice-induced}
one. The qualification is that $\Gamma$ itself depends on the lattice (for
instance, symmetries of order three that combine a translation with a color
rotation exist only when $3\mid L$, and a rectangular torus has no rotations by
$90^\circ$), and that a pattern may not fit in a lattice at all; $\TG$ is
intrinsic in the sense that, whenever the same pattern is present, it does not
change with the size or shape of the domain.

\subsection{Relative periodic orbits}\label{sec:rpo}

In the language of equivariant dynamics, an orbit with $s_{t+\TG}=h\,s_t$ is a
\emph{relative periodic orbit} of $F$ with drift $h$
\cite{Field1980,Krupa1990,Golubitsky2002}: it is periodic, with period $\TG$, in
the quotient space, and in the full space it closes only after the drift has been
applied $m$ times. In systems with continuous symmetries relative periodic orbits
are the generic form of periodic behavior; traveling waves and rotating,
drifting or meandering spiral waves are relative equilibria and relative
periodic orbits of the Euclidean group \cite{Barkley1994,Ashwin1997}. On a finite
lattice the group is discrete, so every drift eventually closes and the orbit is
also periodic in the ordinary sense, but its period $\Tst=\TG\,m$ records the order
of the drift in $\Gamma$ rather than the dynamics of the pattern. This is the
origin of the apparently anomalous time scales of the model, such as state
periods of several hundred steps for a pattern that repeats itself every 8 steps.
The classes of Sec.~\ref{sec:classes} are the types of drift: traveling waves
are relative equilibria ($\TG=1$, $h$ a translation); color-rotating cycles have
$h=\sigma^c$; drifting long cycles combine a translation, a reflection or a
rotation with a possible color rotation; and the asymmetric cycles have trivial
drift, $h=\mathrm{id}$, so that their quotient and full periods coincide. The
clearest example is the $\Tpop=24$ family of Sec.~\ref{sec:t24}, for which
$F^{8}s=\sigma\tau_v s$: a color rotation combined with a translation by one
lattice step, whose order on the torus sets the state period.

\begin{figure}[t]
\centering
\resizebox{\columnwidth}{!}{%
\begin{tikzpicture}[node distance=9mm and 7mm, every node/.style={font=\small},
  lev/.style={draw,rounded corners=2pt,inner sep=3pt,align=center,minimum width=27mm,minimum height=11mm},
  obs/.style={draw,dashed,rounded corners=2pt,inner sep=3pt,align=center,minimum width=24mm,minimum height=11mm},
  >={Stealth[length=2mm]}]
  \node[lev] (st) {configurations\\ $\{X,Y,Z\}^N$\\ $\Tst=72$};
  \node[lev,below left=of st,xshift=6mm] (col) {mod colors $C_3$\\ $\Tcol=72$};
  \node[lev,below right=of st,xshift=-6mm] (geo) {mod lattice $G$\\ $\Tgeo=24$};
  \node[lev,below right=of col,xshift=-6mm] (G) {mod $\Gamma=G\times C_3$\\ $\TG=8$};
  \node[obs,right=12mm of geo] (pop) {populations\\ $\Tpop=24$};
  \node[obs,right=12mm of G] (div) {diversity counts,\\ sorted populations};
  \draw[->] (st) -- node[left,font=\scriptsize]{forget colors\ } (col);
  \draw[->] (st) -- node[right,font=\scriptsize]{\ forget position} (geo);
  \draw[->] (col) -- (G);
  \draw[->] (geo) -- (G);
  \draw[->,dashed] (geo) -- (pop);
  \draw[->,dashed] (G) -- (div);
\end{tikzpicture}}
\caption{The hierarchy of periods of a periodic attractor. Each box is a level of
description, obtained by taking the quotient of the configuration space by a
subgroup of the symmetry group $\Gamma$; dashed boxes are scalar observables
invariant under $G$ (populations) or under $\Gamma$ (diversity counts, sorted
populations), whose periods divide $\Tgeo$ and $\TG$, respectively; moreover
$\Tst/\Tcol\in\{1,3\}$ and $\Tpop\mid3\Tcol$ (Corollary~\ref{cor:lattice}). The
values are those of the long cycle $(24,72,72,24,8)$ at $L=9$, for which
$\Tst=\TG\,m$ with $m=9$, the order of the drift $h=\sigma\tau_{(2,0)}$.}
\label{fig:lattice}
\end{figure}

\begin{remark}[Implementation of the uncolor and geometric periods]
In the simulation code the uncolor configuration is obtained by relabelling
the states in order of first appearance along the site index. This is a
canonical form for the action of the full permutation group $S_3$ of colors,
not only of $C_3$; since transpositions are not symmetries of the dynamics the
two definitions could in principle differ, but in all the attractors we
analyzed they coincide (Sec.~\ref{sec:results}). On the other hand, applying a
lattice transformation to the relabelled configuration and comparing it with
the relabelled final state would \emph{not} test equivalence under $\Gamma$,
because the first-appearance labelling is not covariant under translations or
rotations. The simulation code therefore computes $\TG$ by testing all
$3\times 8\times L^2$ elements of $\Gamma$ directly on the colored
configurations.
\end{remark}

\section{Simulations and measurement}
\label{sec:methods}

For each parameter set we ran independent realizations from random initial
conditions, iterating Eq.~\eqref{eq:rule} until the attractor was detected.
Periods are detected on the fly: for a time series $x_0,\dots,x_n$ we
look for the smallest $p$ such that the tail $x_{\tau},\dots,x_n$ is
$p$-periodic with $n-\tau\ge 2p$, i.e.\ at least two full repetitions are
observed; an incremental algorithm keeps, for each lag $p$, the length of the
current run of coincidences $x_k=x_{k-p}$, which gives the same result as an
exhaustive search at a fraction of the cost. The run stops when the
configuration, the populations and the diversity counts have all been
found periodic (requiring at least ten confirmations), or when the lattice
becomes uniform or frozen. The transient length $\tau$ and $\Tst$ are
recorded, and $\Tgeo$ is computed afterwards by comparing the final
configuration with the $\Tst$ previous ones under all elements of $G$.

To check the recorded values independently, a sample of runs was re-simulated
from its initial condition, and the distinct final configurations for one full
period, with an independent implementation (Python/NumPy); all quantities of
Sec.~\ref{sec:periods} were recomputed from the exact cycle. For a subset of
the runs, $\TG$ was obtained from this re-simulation of the final configuration. The independent implementation was also used
to simulate lattices with $L=5,\dots,30$ (2000 realizations per size); it
detects the attractor exactly, by storing every configuration of the
trajectory until one repeats.

Table~\ref{tab:sweeps} lists the parameter sweeps. Each sweep varies one
parameter around the reference point $\alpha=1/3$, $r=1$,
$P_x=\mathrm{round}(N/3)$, with $L=8$ and $L=9$ ($N=64$, $81$), and a further
sweep varies $L$. The simulations were organized in series of up to
$10^4$ realizations per parameter value, with independent random initial
conditions; Appendix~\ref{app:study} describes the protocol and the verification, and the Supplemental Material \cite{SM} the statistical treatment. Runs at the reference point coming from different sweeps are
statistically equivalent and are pooled (Sec.~\ref{sec:S-pool} of the Supplemental Material), together
with those at $\alpha=3/8$, which defines the same rule when $r=1$; this gives
more than $4.5\times10^5$
realizations at the reference point for each of the two sizes analyzed in
detail. The number of runs for each parameter value is given in
Table~\ref{tab:sweeps}.

\begin{table*}[t]
\caption{Parameter sweeps of the main simulations
(about $10^7$ runs in total). Reference values: $\alpha=1/3$, $r=1$,
$P_x=\mathrm{round}(N/3)$. The $\alpha$ sweep does not contain $\alpha=1/3$
itself but $\alpha=3/8$, which defines the same rule for $r=1$; its runs are
counted at the reference point. In addition, \prelim{$5.2\times10^{4}$} runs of the
independent implementation cover $L=5,\dots,30$ (Sec.~\ref{sec:size} and
Appendix~\ref{app:study}).}
\label{tab:sweeps}
\footnotesize
\begin{ruledtabular}
\begin{tabular}{lllrrr}
Sweep & $L$ & Values & Runs per value & Total runs & Runs at reference\\
\colrule
threshold $\alpha$ & 8 & $m/8$, $m=0,\dots,8$ & \prelim{$1.1$--$1.3\times10^{5}$} & \prelim{$1.0\times10^{6}$} & \prelim{$1.1\times10^{5}$}\\
                   & 9 & $m/8$, $m=0,\dots,8$ & \prelim{$1.1$--$1.3\times10^{5}$} & \prelim{$1.0\times10^{6}$} & \prelim{$1.1\times10^{5}$}\\
radius $r$         & 8 & $1,2,3$ & \prelim{$1.1$--$1.2\times10^{5}$} & \prelim{$3.5\times10^{5}$} & \prelim{$1.2\times10^{5}$}\\
                   & 9 & $1,\dots,4$ & \prelim{$1.0$--$1.2\times10^{5}$} & \prelim{$4.4\times10^{5}$} & \prelim{$1.2\times10^{5}$}\\
composition $P_x$  & 8 & $0,\dots,21$ & \prelim{$1.1$--$1.3\times10^{5}$} & \prelim{$2.5\times10^{6}$} & \prelim{$1.1\times10^{5}$}\\
                   & 9 & $0,\dots,27$ & \prelim{$1.1$--$1.2\times10^{5}$} & \prelim{$3.1\times10^{6}$} & \prelim{$1.1\times10^{5}$}\\
size $L$           & $5,\dots,30$ & -- & \prelim{$2.2\times10^{4}$--$1.3\times10^{5}$} & \prelim{$1.7\times10^{6}$} & \\
\end{tabular}
\end{ruledtabular}
\end{table*}

\section{Results}
\label{sec:results}

\subsection{Consistency checks}

The independent implementation (Sec.~\ref{sec:methods}) reproduced exactly
the class of the attractor, the transient length and the five periods recorded by
the simulation code in all the \prelim{$2.4\times10^4$} runs that we re-simulated from
their initial condition, and $\Tst$, $\Tpop$, $\Tcol$ and $\Tgeo$ in all the
runs whose final configuration we re-simulated (Appendix~\ref{app:study}). The
population period reported by the code (the maximum of the periods of
$n_\mathrm{X}$, $n_\mathrm{Y}$ and $n_\mathrm{Z}$) always coincided with their
least common multiple, and the $S_3$-based uncolor period always coincided
with the $C_3$-based $\Tcol$ of Sec.~\ref{sec:periods}. None of the
divisibility relations (a)--(c) of Corollary~\ref{cor:lattice} was violated in
any of the \prelim{$1.2\times10^6$} oscillating runs, and $\Tdiv\mid\TG$ failed in
\prelim{21} runs on long cycles at $L\ge15$, in which the code recorded a
spurious diversity period (Appendix~\ref{app:study}).

\subsection{Census of attractors}

At the reference threshold $\alpha=1/3$ (a site needs at least three of its
eight neighbors in the predator state when $r=1$) most realizations end in a
fixed point (Fig.~\ref{fig:census}a): in \prelim{76--79\%} of the runs one state
invades the whole lattice (\emph{absorbing} state) and in \prelim{17--22\%} the
lattice freezes in a configuration in which no site has enough predators to
change. Without exception, frozen configurations contain exactly two states:
a predator--prey pair can be stable because the prey only sees its predator
along a short interface, but whenever the three states are present some site
sees enough predators and the configuration moves on. Most frozen
configurations are in fact ``almost absorbing'': a sea of one state containing
a small island of its predator that is too small to invade it. Up to the
symmetries in $\Gamma$, the \prelim{$8.2\times10^4$} frozen runs at $L=8$ end in
\prelim{359} distinct configurations and the \prelim{$7.7\times10^4$} at $L=9$ in
\prelim{534}; the most frequent one, a single predator site in the sea of
its prey, accounts for about half of them (\prelim{49\%} and \prelim{52\%}), followed by
a domino and a $2\times2$ block of predator sites. A prey site next to such an
island sees at most two predators among its eight neighbors, below the
threshold of three. Only \prelim{2--5\%} of the runs end in an oscillating
attractor (Table~\ref{tab:census}).

Increasing the radius to $r=2$ shifts the outcome from dominance to frozen
coexistence: the frozen fraction rises to \prelim{42\%} ($L=8$) and \prelim{39\%}
($L=9$), the absorbing fraction falls to \prelim{56--58\%}, and the dynamics
becomes faster (median transient of \prelim{5--7} steps against \prelim{14--19} for
$r=1$). Transients also grow with $L$: for $r=1$ the median time to absorption
goes from \prelim{6} steps at $L=5$ to \prelim{20} at $L=9$.

The extreme parameter values are trivial and serve as a check. For low
thresholds ($\alpha=0$ and $\alpha=1/8$) every run ends in a \emph{uniform
rotation} in which all sites change at every step, $s_{t+1}=\sigma s_t$, with
$\Tst=3$ and $\Tcol=1$. For high thresholds, $\alpha\ge1/2$ ($\theta\ge4$), no
front can advance: more than \prelim{99.5\%} of the runs freeze, for
$\alpha\ge5/8$ already in the initial configuration, and no oscillating attractor
is observed. For $P_x=0$ the lattice is uniform from the start, and for
$P_x=1$ the single Y site is invaded at once and the single X site remains
isolated among Z sites, which it cannot invade (one predator out of eight is
below the threshold): every run freezes.

\begin{table*}[t]
\centering
\caption{Outcome of the runs at the reference point $\alpha=1/3$, $P_x=N/3$
(all runs pooled, including $\alpha=3/8$ for $r=1$). $\tau_{1/2}$ is the median transient length (in steps) over all
runs, and $\langle n_3\rangle$ the mean number of sites whose closed
neighborhood contains the three states at the end of the run. Fixed points dominate at every size, and the oscillating fraction is largest at $L=9$ ($r=1$).}
\label{tab:census}
\small
\begin{tabular}{rrrrrrrr}
\toprule
$L$ & $r$ & Runs ($10^3$) & Absorbing & Frozen & Oscillating & $\tau_{1/2}$ & $\langle n_3\rangle$\\
\midrule
5 & 1 & 129 & 75.6\% & 22.3\% & 2.1\% & 5 & 0.13\\
6 & 1 & 129 & 78.2\% & 18.7\% & 3.2\% & 8 & 0.14\\
7 & 1 & 121 & 79.4\% & 17.5\% & 3.1\% & 11 & 0.09\\
8 & 1 & 456 & 79.0\% & 17.9\% & 3.1\% & 14 & 0.08\\
8 & 2 & 121 & 56.1\% & 42.1\% & 1.8\% & 5 & 0.69\\
8 & 3 & 108 & 61.6\% & 38.4\% & 0.0\% & 3 & 0.00\\
9 & 1 & 452 & 78.0\% & 17.1\% & 4.9\% & 19 & 0.37\\
9 & 2 & 112 & 58.0\% & 39.0\% & 3.0\% & 7 & 1.20\\
9 & 3 & 103 & 49.4\% & 50.5\% & 0.1\% & 4 & 0.06\\
\bottomrule
\end{tabular}
\end{table*}

\begin{figure*}[t]
\centering
\includegraphics[width=\textwidth]{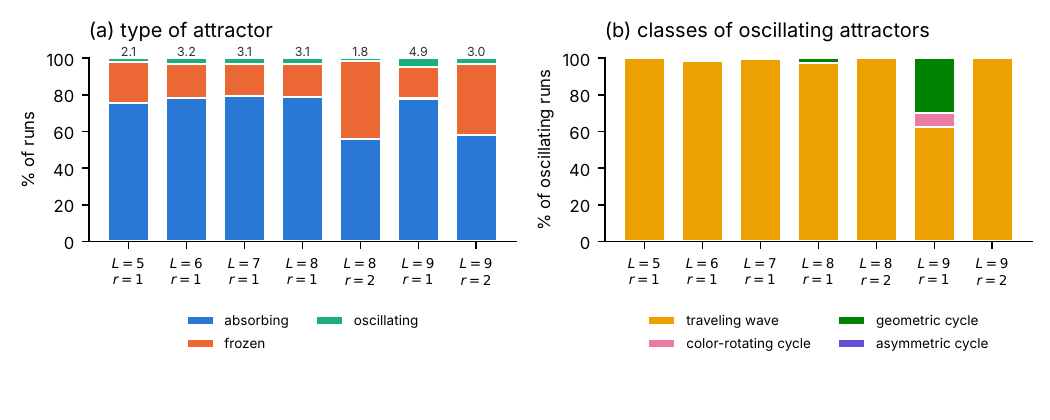}
\caption{Attractors reached from random initial conditions at $\alpha=1/3$,
$P_x=N/3$. (a) Fraction of runs ending in an absorbing state, a frozen
configuration or an oscillating attractor (the number above each bar is the
percentage of oscillating runs). (b) Classes of oscillating attractors,
defined by the tuple of periods (Sec.~\ref{sec:classes}); see Table~\ref{tab:sweeps} for the number of runs.}
\label{fig:census}
\end{figure*}

\subsection{Tuple frequencies and basins of attraction}\label{sec:basins}

Because the initial configurations are drawn uniformly among those with the
prescribed composition, the fraction of runs that end in a given attractor is an
unbiased estimate of the relative size of its basin of attraction within that
set of configurations. Grouping the runs by their tuple
$(\Tpop,\Tst,\Tcol,\Tgeo,\TG)$ therefore gives a coarse but symmetry-invariant
picture of the basin structure, and Table~\ref{tab:basins} lists it for the two
sizes on which we focus. With \prelim{$4.5\times10^5$} runs at each of $L=8$
and $L=9$ the statistical error of the larger basins is
below a tenth of a percent, small enough to compare $L=8$ and $L=9$ in
detail; the Good--Turing estimator (Sec.~\ref{sec:S-stats} of the Supplemental Material \cite{SM}) shows that
the list of tuples is complete for all practical purposes at these sample sizes.

Three points emerge. (i) The fixed points (absorbing plus frozen, all with the
trivial tuple) take \prelim{96.9\%} of the configuration space at $L=8$ and
\prelim{95.1\%} at $L=9$, with a very similar split between absorbing and
frozen states (\prelim{79.0/17.9\%} and \prelim{78.0/17.1\%}). (ii) At $L=8$ the oscillating part of the space is almost entirely
occupied by a single tuple, the traveling wave $(1,8,8,1,1)$, with four tiny
basins of geometric and color-rotating cycles (together \prelim{0.08\%}). At $L=9$ the oscillating fraction is
\prelim{1.6} times larger and is shared by \prelim{ten} tuples; the long cycle
$(24,72,72,24,8)$ alone has a basin (\prelim{1.0\%}) comparable to that of all
the waves of unequal widths (\prelim{2.7\%}). (iii) For the long cycles each tuple
corresponds to a single attractor (Sec.~\ref{sec:long}), so the
frequency of the tuple is directly the basin size of one attractor. The
waves and the frozen configurations, instead, gather several attractors that
the tuple does not distinguish (\prelim{9} waves at $L=8$ and \prelim{14} at $L=9$;
\prelim{359} and \prelim{534} frozen configurations up to $\Gamma$), so their
tuple frequency is the total basin of a family.

\begin{table}[t]
\caption{Frequency of each tuple $(\Tpop,\Tst,\Tcol,\Tgeo,\TG)$ at the
reference point ($\alpha=1/3$, $r=1$, $P_x=\mathrm{round}(N/3)$, including the
runs at $\alpha=3/8$, which define the same rule), all runs pooled. Since the initial conditions are drawn uniformly
among the configurations with the prescribed composition, the fraction of runs
ending in a tuple estimates the relative size of the union of the basins of
attraction of the attractors with that tuple. ``SE'' is the binomial standard
error of the percentage. Fixed points take almost all of the configuration space, and at $L=9$ the two-dimensional cycles together have a basin comparable to that of the traveling waves.}
\label{tab:basins}
\footnotesize
\begin{ruledtabular}
\setlength{\tabcolsep}{3pt}
\begin{tabular}{llrrr}
Attractor & Tuple & Runs & \% & SE\\
\colrule
\multicolumn{5}{l}{$L=8$ (\prelim{455\,559} runs)}\\
absorbing & $(1,1,1,1,1)$ & \prelim{359\,734} & \prelim{79.0} & 0.060\\
frozen & $(1,1,1,1,1)$ & \prelim{81\,647} & \prelim{17.9} & 0.057\\
traveling wave & $(1,8,8,1,1)$ & \prelim{13\,801} & \prelim{3.0} & 0.025\\
geometric & $(1,16,16,4,4)$ & \prelim{239} & \prelim{0.052} & 0.003\\
geometric & $(22,88,88,22,22)$ & \prelim{95} & \prelim{0.021} & 0.002\\
geometric & $(6,12,12,6,6)$ & \prelim{32} & \prelim{0.007} & 0.001\\
color-rotating & $(6,12,4,6,2)$ & \prelim{11} & \prelim{0.002} & 0.0007\\
\colrule
\multicolumn{5}{l}{$L=9$ (\prelim{451\,819} runs)}\\
absorbing & $(1,1,1,1,1)$ & \prelim{352\,412} & \prelim{78.0} & 0.062\\
frozen & $(1,1,1,1,1)$ & \prelim{77\,165} & \prelim{17.1} & 0.056\\
traveling wave & $(1,9,9,1,1)$ & \prelim{12\,067} & \prelim{2.7} & 0.024\\
geometric & $(24,72,72,24,8)$ & \prelim{4545} & \prelim{1.0} & 0.015\\
wave (equal bands) & $(1,9,3,1,1)$ & \prelim{1795} & \prelim{0.40} & 0.009\\
color-rotating & $(18,18,6,18,6)$ & \prelim{1715} & \prelim{0.38} & 0.009\\
geometric & $(21,189,189,21,21)$ & \prelim{1271} & \prelim{0.28} & 0.008\\
geometric & $(23,69,69,23,23)$ & \prelim{582} & \prelim{0.13} & 0.005\\
geometric & $(18,108,108,18,6)$ & \prelim{189} & \prelim{0.042} & 0.003\\
geometric & $(18,54,54,18,6)$ & \prelim{39} & \prelim{0.009} & 0.001\\
asymmetric & $(14,14,14,14,14)$ & \prelim{20} & \prelim{0.004} & 0.0010\\
geometric & $(21,63,63,21,7)$ & \prelim{19} & \prelim{0.004} & 0.0010\\
\end{tabular}
\end{ruledtabular}
\end{table}

\subsection{Traveling waves}\label{sec:waves}

Traveling waves account for \prelim{83\%} of all oscillating runs at
$\alpha=1/3$ and $L\le9$, and for practically all of them when $L\le 8$ or $r\ge2$. For $r=1$
every wave consists of three straight bands, one per state, arranged in the
cyclic order X, Y, Z and moving by one site per step perpendicular to the
bands (Fig.~\ref{fig:spacetime}a of the Supplemental Material \cite{SM}). The mechanism is simple and explains why the
waves are tied to the threshold: a site in the column adjacent to the band of
its predator has exactly three of its eight neighbors in the predator state,
$3/8\ge 1/3$, and changes; a site in the interior of its band has no predator
neighbor and does not change. Because the bands follow the cyclic order, the
three fronts advance in the same direction at the same speed and the width of
every band is conserved. This is precisely the band test of Fisch, Gravner and
Griffeath \cite{Fisch1991a}: with $\rho=1$ a front sees $2\rho+1=3$ predator
neighbors, so straight waves exist for $\theta\le3$, i.e.\ $\alpha\le3/8$, and
should disappear for larger thresholds. Hence only the three front columns change at each step
(a fraction $3/L$ of the sites: 60\% for $L=5$, 33\% for $L=9$), the
populations are constant, and the wave is completely characterized by its band
widths $(w_1,w_2,w_3)$, $w_1+w_2+w_3=L$. Up to the symmetries in $\Gamma$ we
found \prelim{2, 4, 5, 9 and 14} distinct waves for $L=5,\dots,9$; the most
frequent are those with the most balanced widths, e.g.\ 2-2-1 for $L=5$
(\prelim{2282} of \prelim{2765} runs). When $L$ is a multiple of three the equal-width wave
($w_i=L/3$) has the additional color symmetry $\Tcol=L/3$. Topologically,
a band wave winds once through the cycle X$\to$Y$\to$Z along the closed loop of
the torus perpendicular to the bands; in excitable automata on graphs such
non-contractible windings are known to sustain periodic waves
\cite{Cai2018,Doba2018}, which suggests that the winding number of the final
configuration along the two cycles of the torus could serve as an additional,
conserved label of the attractor. Point defects, i.e.\ plaquettes around which
the color winds (Sec.~\ref{sec:t24}), are absent from all the waves; whether
the winding numbers along the torus are conserved by the threshold rule is left
for future work. Waves form
quickly: the median transient grows from \prelim{4} steps ($L=5$) to
\prelim{11} ($L=9$), shorter than that of absorbing runs of the same size.

For $r=2$ the waves still move by one site per step with $\Tst=L$ and
$\Tgeo=1$, but only \prelim{31\%} ($L=8$) and \prelim{8\%} ($L=9$) of them have
straight fronts; the rest have jagged fronts, which multiplies the number of
distinct waves (\prelim{165} different waves, up to $\Gamma$, in
\prelim{2146} runs for $L=8$, and \prelim{761} in \prelim{3427} runs for $L=9$).
One run at $L=8$, $r=2$ ended in a wave whose jagged front only repeats its shape
every two steps, $(1,8,8,2,2)$.

\subsection{Long cycles}\label{sec:long}

The remaining \prelim{17\%} of the oscillating runs (\prelim{8380} runs at
$L=9$, $r=1$, \prelim{377} at $L=8$, \prelim{10} at $L=7$, \prelim{60} at $L=6$ and none
at $L=5$) end in genuinely two-dimensional
cycles: patches of the three states that chase each other, each state invading
the one in front of it while being invaded from behind, in the manner of the
spiral waves of spatial rock--paper--scissors games but on a torus so small that
the wave collides with itself (Fig.~\ref{fig:long}). Table~\ref{tab:long} of the Supplemental Material
summarizes them. Three features stand out.

First, each period corresponds to essentially a single attractor. Grouping the
final cycles up to the elements of $\Gamma$ and time shifts, the
\prelim{4545} runs with $\Tst=72$ all end in the same cycle, and the same
happens for every other long cycle at $L=9$ except the color-rotating one with
$\Tst=18$, which has two variants; at $L=6$, $7$ and $8$ there are at most five
distinct cycles per tuple (Table~\ref{tab:long} of the Supplemental Material).

Second, most long cycles drift: after $\Tgeo$ steps the pattern reappears
translated by a few sites, so that $\Tst$ is $\Tgeo$ times the number of
translations needed to go round the torus. The drift is much slower than the
waves: $1/8$ site per step for $\Tst=72$ and $4/21\approx0.19$ for
$\Tst=189$, against one site per step. The 18-step cycle does not drift at all; it
pulsates in place and returns with its colors rotated every six steps.

Third, the populations show strong cycles of dominance, absent in the waves
(Fig.~\ref{fig:pops}). In the 72-step cycle each state goes from \prelim{10} to
\prelim{56} of the 81 sites and dominates once every 24 steps; in the 189-step
cycle it goes from 12 to 53 every 21 steps; in the pulsating 18-step cycles the
populations change much less (20--37). Long cycles also take longer to form
(median transients of \prelim{20--36} steps for the most frequent cycles against \prelim{11} for the waves at
$L=9$).

The abundance of long cycles at $L=9$ (\prelim{1.9\%} of the runs, against
\prelim{0.08\%} at $L=8$ and less than \prelim{0.05\%} at $L\le7$) is striking.
Several of the transformations that generate them have order three (a
translation by three sites, a color rotation), which is possible only because
$L=9$ is a multiple of three. As shown in Sec.~\ref{sec:size}, $L=9$ is also
the smallest size at which the population period $\Tpop=24$ appears, and this
period is present in every larger size of our sample.

\begin{figure*}[t]
\centering
\includegraphics[width=\textwidth]{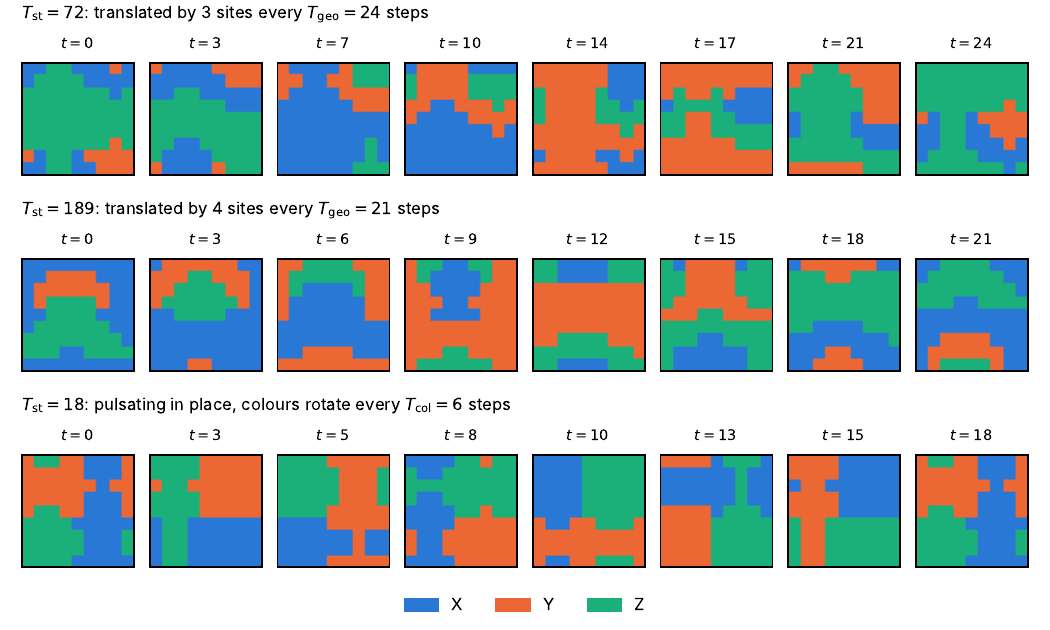}
\caption{Three long cycles at $L=9$, $r=1$, $\alpha=1/3$, shown over one
geometric period. Top: $\Tst=72$, the pattern returns translated by three
sites after $\Tgeo=24$ steps. Middle: $\Tst=189$, translated by four sites after
21 steps. Bottom: $\Tst=18$, the pattern pulsates in place and returns with its
colors rotated after $\Tcol=6$ steps.}
\label{fig:long}
\end{figure*}

\subsection{Local diversity}\label{sec:diversity}

Cyclic dominance is usually studied as a mechanism that sustains diversity
\cite{Kerr2002,Reichenbach2007}, so it is natural to ask where local diversity,
i.e.\ the coexistence of the three states within a neighborhood, survives in
this model. In the final
state, absorbing and frozen configurations have no site with the three states
in its closed neighborhood ($n_3=0$), so that the average over all runs is
of the order of one site or less (Table~\ref{tab:census}). Local diversity survives only on
oscillating attractors, and very unevenly: traveling waves keep \prelim{2--6}
such sites on average for $r=1$ (only those next to bands of width one), the long
cycles keep \prelim{14--22} in their final configuration, because the patches touch each other on several
sides, and waves with $r=2$ keep \prelim{about 39}, simply because larger
neighborhoods reach across the bands. For $r=1$ the long cycles are therefore the
attractors that best preserve local diversity.

The symmetry analysis of Sec.~\ref{sec:periods} explains where this diversity
sits and how balanced it is, through three exact links.

\paragraph{Diversity is an intrinsic observable.} The diversity counts $n_m$
are invariant under the whole of $\Gamma$, so they are functions on the quotient
space and their period always divides $\TG$ (Corollary~\ref{cor:lattice}d),
which holds in all runs except \prelim{21} on three long cycles at $L\ge15$, where
the code recorded a spurious diversity period (Appendix~\ref{app:study}). Local
diversity is therefore blind to the lattice-induced factor $m$: a drifting and a
stationary member of the same family, with very different $\Tst$, have the same
diversity dynamics.

\paragraph{Defects guarantee diversity.} A plaquette of $2\times2$ sites with
winding number $\pm1$ (Sec.~\ref{sec:t24}) must contain the three states,
because the four steps around it, each in $\{-1,0,1\}$, add up to $\pm3$; and the
closed neighborhood of each of its four sites contains the whole plaquette.
Every topological defect thus creates at least four sites with $n_3=1$, and
these sites cannot disappear while the defect persists. The data show that
this is the main source of local diversity on two-dimensional attractors. At
$L=9$ every long or color-rotating cycle contains two or four defects, and the
defect plaquettes account for \prelim{48--93\%} of the sites with three states
(\prelim{14--18} sites with three states on average over a whole cycle, about a fifth of the lattice), the rest lying on the
junctions between patches. Traveling waves have no defects, and their few
diverse sites lie on bands of width one. For $L=31$--40 the share of the lattice
with three states falls to \prelim{1--2\%}, and in the stationary $\Tpop=24$
attractors more than \prelim{99\%} of these sites belong to the spiral cores: the
number of diverse sites is set by the number of defects (typically two or four),
not by the area. On large lattices local diversity is therefore a
topological property, localized at the cores of the spiral pairs.

\paragraph{Color symmetry guarantees balanced coexistence.} If $\Tcol<\Tst$,
then $s_{t+\Tcol}=\sigma^c s_t$ with $c\neq0$ and $\Tst=3\Tcol$, so
$n_k(t+\Tcol)=n_{k-c}(t)$ for the population $n_k$ of each state, and the
averages of the three populations over one state period are equal, as are the
time averages of any other observable that is permuted by $\sigma$. All
attractors with a color symmetry in our data have exactly balanced average
populations. Without it the balance is not guaranteed: traveling waves with
bands of unequal width keep the three states with average densities that differ
by up to \prelim{1/3}, and the long cycles without color symmetry, such as
$(21,189,189,21,21)$, $(23,69,69,23,23)$ and the asymmetric cycle, differ by
\prelim{3--7\%}. The type of the generator $h$ thus determines not only whether
the three states coexist but whether they do so on equal terms.

\subsection{Classes of oscillating attractors}\label{sec:classes}

Table~\ref{tab:catalogue} of the Supplemental Material \cite{SM} lists all the tuples
$(\Tpop,\Tst,\Tcol,\Tgeo,\TG)$ observed for oscillating attractors at
$\alpha=1/3$, together with the transformation that generates the cycle.
Representative snapshots are shown in Fig.~\ref{fig:snapshots} and the
corresponding population time series in Fig.~\ref{fig:pops}. The tuples fall
into five classes, which we define directly from the periods:

\begin{description}
\item[Uniform rotation] ($\Tcol=1$). The whole configuration changes color
at every step. The state period is $\Tst=3$ and the pattern of domains never
changes. The population period distinguishes two cases: $\Tpop=3$ when the
three populations are different, and $\Tpop=1$ when they are exactly equal,
as happens for $L=9$, $\alpha=0$ where $n_c=27$ for all $c$. Thus, at $\alpha=0$,
$L=8$ and $L=9$ give the tuples $(3,3,1,3,1)$ and $(1,3,1,3,1)$: the same
microscopic dynamics, distinguishable only through $\Tpop$.

\item[Traveling waves] ($\Tgeo=1<\Tst$). Bands move rigidly by one site per
step (Fig.~\ref{fig:snapshots}a). The configuration returns after $\Tst=L$ steps,
but since every configuration is a translate of the previous one, the
populations are constant and $\Tpop=1$. When the three bands have equal width
($L$ divisible by 3), a shift by $L/3$ sites is equivalent to a color
rotation and $\Tcol=L/3$; otherwise $\Tcol=L$. Traveling waves are by far the
most common oscillating attractor (Sec.~\ref{sec:waves}).

\item[Color-rotating cycles] ($\Tcol<\Tst$, $\Tgeo>1$). After $\Tcol=\Tst/3$
steps the configuration is the same pattern with the colors rotated
(Fig.~\ref{fig:snapshots}b). The three populations then follow the same
time series shifted by $\Tcol$ (Fig.~\ref{fig:pops}b), so that $\Tpop=\Tst$
while the sorted populations repeat every $\Tcol$ steps. Example: $L=9$,
$(18,18,6,18,6)$. For $L=6$ and $L=8$ we also found $(6,12,4,6,2)$
(Fig.~\ref{fig:snapshots}c), in which the color rotation combined with a
translation by $(3,3)$ (by $(0,4)$ for $L=8$) maps the configuration onto
itself after only $\TG=2$ steps.

\item[Geometric cycles] ($\Tgeo<\Tst=\Tcol$). After $\Tgeo$ steps the pattern
reappears translated (and sometimes reflected) and the full period is
$\Tst=m\,\Tgeo$, with $m$ the order of the transformation. These are the long
cycles: $(21,189,189,21,21)$ ($L=9$, translation by 4 sites, $m=9$),
$(24,72,72,24,8)$ ($L=9$, translation by 3 sites, $m=3$),
$(23,69,69,23,23)$, $(18,108,108,18,6)$ (reflection plus translation, $m=6$),
$(22,88,88,22,22)$ ($L=8$, $m=4$). Here the populations repeat after $\Tgeo$
steps, so $\Tpop=\Tgeo<\Tst$, and an observer with access only to census
data would underestimate the period by the factor $m$.
In several of these cycles $\TG<\Tgeo$: for $(24,72,72,24,8)$ the configuration
after 8 steps is the initial one translated by two sites \emph{and} with its
colors rotated (Fig.~\ref{fig:snapshots}d). This elementary motif of 8 steps
is invisible to $\Tcol$ (72) and to $\Tgeo$ (24), and in the population series
it appears only as the threefold repetition of the same peak shape with the
colors permuted (Fig.~\ref{fig:pops}d). A special case is $(1,16,16,4,4)$
($L=8$), where the transformation is a rotation by $90^\circ$ combined with a
translation by $(4,4)$ and the
populations are constant.

\item[Asymmetric cycles] ($\Tpop=\Tst=\Tcol=\Tgeo=\TG$, generator $h=\mathrm{id}$).
No element of $\Gamma$ maps the configuration onto a later one before it returns
to itself, so all periods coincide and any one of them is a complete description.
At $L=8$ and $L=9$ there is a single case, $(14,14,14,14,14)$ at $L=9$, described
below.
\end{description}

\paragraph{The asymmetric cycle at $L=9$.} The tuple $(14,14,14,14,14)$ was reached
in \prelim{20} runs at $L=9$, about \prelim{0.1\%} of the oscillating runs, and all of
them belong to one attractor up to $\Gamma$. Its structure is shown in
Fig.~\ref{fig:asym}. The configuration contains two pairs of opposite spiral
defects, the same objects that build the $\Tpop=24$ family of Sec.~\ref{sec:t24},
but here they are pinned: the four defects stay at the same plaquettes during the
whole cycle, and the domains rotate around them. Every site has period
$T_i=14$ and makes exactly one turn per cycle, so the whole lattice rotates as
a single phase field with frequency $3/14\approx0.21$, faster than the
$\Tpop=24$ spirals ($1/8$). The populations oscillate with large amplitude
(between 13 and 36 sites per state) and without any repetition within the cycle,
which is why $\Tpop=\Tst$. The name ``asymmetric'' refers to the dynamics, not to
the pattern: each configuration of the cycle is invariant under a reflection
that exchanges the two pairs of defects, but this static symmetry does not map
the configuration onto a later one, and no translation, rotation or color
permutation does either. The same tuple is reached on the $9\times12$ torus
(Sec.~\ref{sec:tests}), as expected for a pattern that uses only the side of
length 9.

Asymmetric cycles are rare at the sizes of the main study but not in general
(Fig.~\ref{fig:asym}c). They are absent for $L\le8$, take less than
\prelim{4\%} of the oscillating runs for $L\le18$, and become common for $L\ge19$,
because the attractors with $\Tpop=80$, which take up to \prelim{38\%} of the
oscillating runs at $L=30$, belong to this class: they return to themselves after 80
steps without any intermediate symmetry. In this sense the $(14,14,14,14,14)$
cycle of $L=9$ is the smallest representative of the class that dominates the
oscillating attractors of large lattices together with the $\Tpop=24$ family.

\begin{figure*}[t]
\centering
\includegraphics[width=\textwidth]{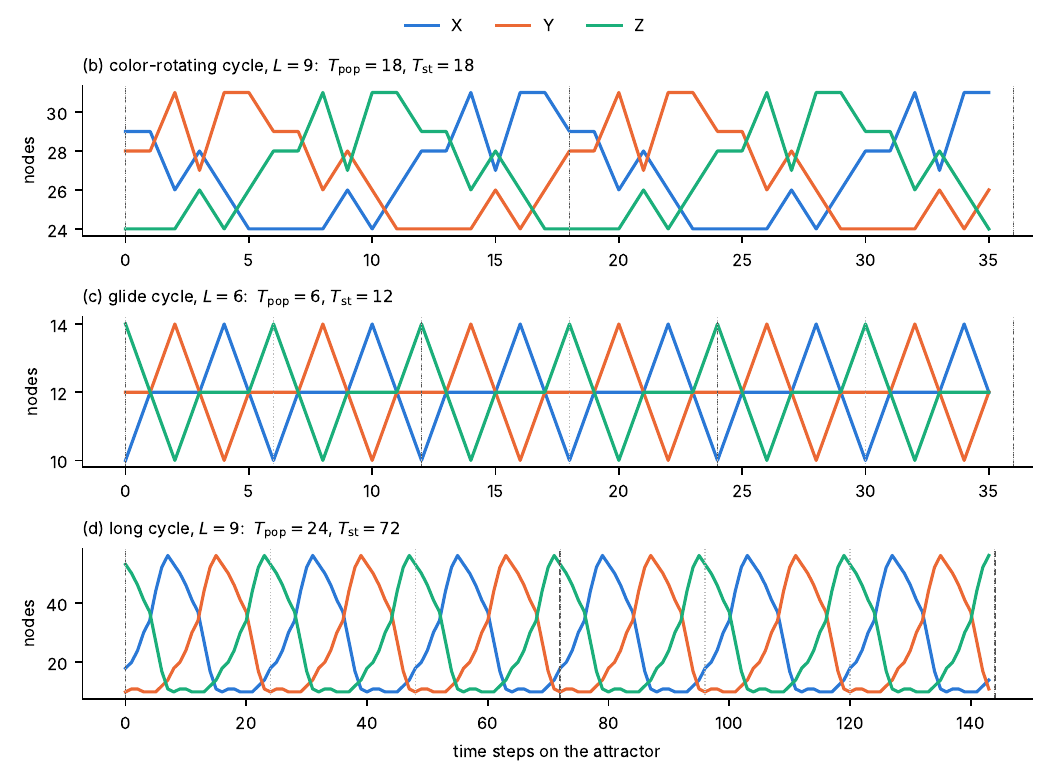}
\caption{Populations $n_\mathrm{X}$, $n_\mathrm{Y}$, $n_\mathrm{Z}$ on the
attractors (b)--(d) of Fig.~\ref{fig:snapshots}. Dashed lines mark multiples of
$\Tst$ and dotted lines multiples of $\Tpop$. In (c) and (d) the populations
repeat several times within one state period; in (b) and (d) the three curves
are copies of each other shifted in time, the signature of a color rotation
in $\Tcol$ or $\TG$.}
\label{fig:pops}
\end{figure*}

\begin{figure*}[t]
\centering
\includegraphics[width=\textwidth]{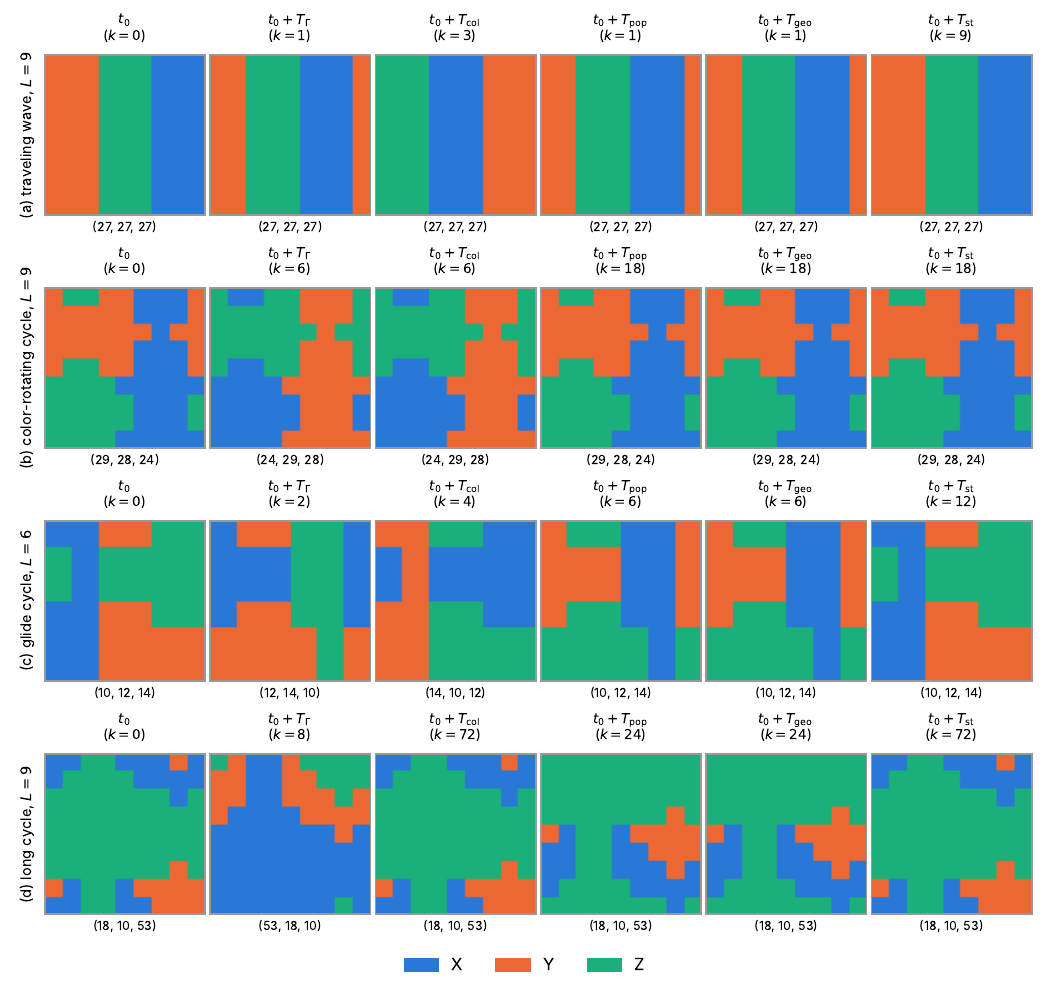}
\caption{Snapshots of four oscillating attractors ($\alpha=1/3$, $r=1$) at
the time lags given by the periods of the tuple. In each row the first panel
is an arbitrary configuration of the cycle and the following ones are the
configurations after $\TG$, $\Tcol$, $\Tpop$, $\Tgeo$ and $\Tst$ steps. The
numbers under each panel are the populations $(n_\mathrm{X},n_\mathrm{Y},n_\mathrm{Z})$:
they coincide with those of the first panel after $\Tpop$ steps, and are
cyclically permuted after a color rotation.
(a) Traveling wave, $L=9$: $(\Tpop,\Tst,\Tcol,\Tgeo,\TG)=(1,9,3,1,1)$.
(b) Color-rotating cycle, $L=9$: $(18,18,6,18,6)$.
(c) Color-rotating cycle with translation, $L=6$: $(6,12,4,6,2)$.
(d) Long geometric cycle, $L=9$: $(24,72,72,24,8)$.}
\label{fig:snapshots}
\end{figure*}

\begin{figure*}[t]
\centering
\includegraphics[width=0.85\textwidth]{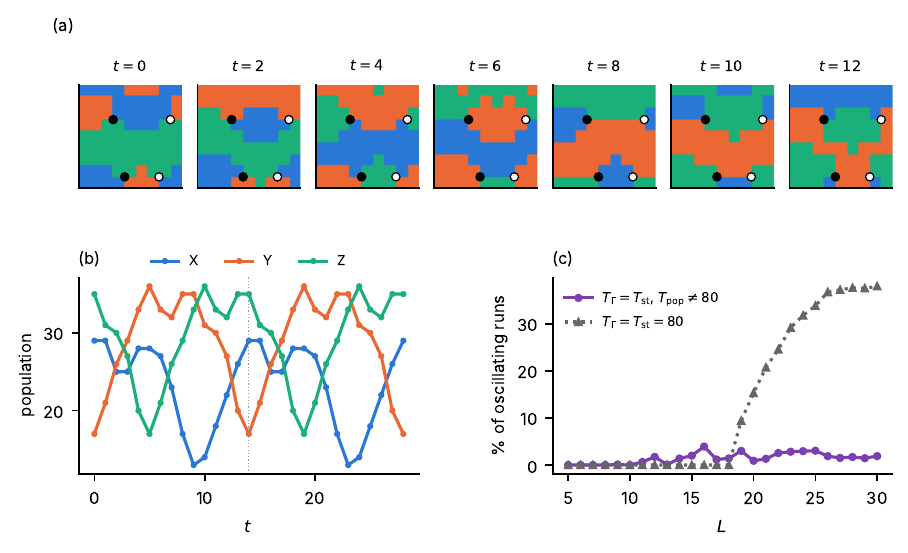}
\caption{The asymmetric cycle $(14,14,14,14,14)$ at $L=9$ ($\alpha=1/3$, $r=1$,
$P_x=27$). (a) Configurations every two steps over one period; white and black
circles mark the plaquettes with winding number $+1$ and $-1$ (spiral cores),
which do not move. (b) Populations of the three states over two periods (dotted
line: $t=\Tst=14$). (c) Share of the oscillating runs that end in an asymmetric
cycle ($\TG=\Tst$, $h=\mathrm{id}$) as a function of $L$, separating the
attractors with $\Tpop=80$.}
\label{fig:asym}
\end{figure*}

\subsection{What a single period misses}

To quantify the complementarity of the periods we counted, among the
oscillating attractors at $\alpha=1/3$ with $L\le9$ (Table~\ref{tab:catalogue} of the Supplemental Material,
all $r$ pooled), how many distinct tuples share each value of a single period.
There are \prelim{26} distinct tuples. Knowing only $\Tpop$ leaves \prelim{13}
distinguishable values, and the value $\Tpop=1$ alone gathers \prelim{9}
different tuples (all the traveling waves plus the rotated cycle of $L=8$).
Knowing only $\Tgeo$ or only $\TG$ leaves \prelim{14} and \prelim{12} values, with up
to 7 tuples merged. $\Tst$ alone gives \prelim{18} values: for instance $\Tst=9$ does not
distinguish the equal-band from the unequal-band waves of $L=9$, nor these
from the color-rotating cycles of $L=6$ and $L=7$ with $\Tst=9$. $\Tcol$ alone
comes closest (\prelim{20} values). The pairs $(\Tcol,\Tgeo)$ and $(\Tcol,\TG)$
separate all \prelim{26} tuples; $(\Tpop,\Tcol)$ fails only for the two waves of
$L=8$, $r=2$ with straight and with breathing fronts, and pairs without $\Tcol$
give at most \prelim{24} classes. No single period is therefore a complete
label even in this small catalogue, and the remaining periods are needed to
interpret the label: $\Tpop$ gives the link with aggregated data, $\Tgeo$ the
motion of the pattern and $\TG$ the hidden space--color motif.

\subsection{Dependence on lattice size}\label{sec:size}

We focus on $L=8$ and $L=9$ because they are large enough to display the
whole repertoire of attractor classes and small enough to sample the basins
with high precision, re-simulate every attractor and test all elements of
$\Gamma$. The size $L=9$ has the additional singularity of being a multiple of
three, which allows the equal-band wave, translations of order three and
attractors whose symmetry combines a translation with a color rotation.
Larger lattices are nevertheless informative about how representative these
two sizes are. Besides the $L$ sweep of the simulation code (up to $L=30$, $10^5$ runs per
size for $L\le14$, \prelim{$1.2\times10^5$} at $L=15$ and 16, \prelim{$5.1\times10^4$} at $L=17$ and \prelim{$2.2\times10^4$} for
larger sizes) we ran \prelim{2000} realizations per size for
$L=5,\dots,30$ with the independent implementation of Sec.~\ref{sec:methods}
(Fig.~\ref{fig:size}); for $L\le9$ its frequencies agree with those of the
main simulations within statistical error. Larger square tori ($L=31$--40) and a
rectangular torus are discussed in Sec.~\ref{sec:tests}.

The frequency of the fixed points decreases steadily with $L$
(Fig.~\ref{fig:size}a). It is almost constant up to $L=9$ (\prelim{95--98\%}),
falls below one half at \prelim{$L\approx20$} and is only \prelim{16\%} at $L=30$,
where \prelim{84\%} of the runs end in an oscillating attractor. Both kinds of
fixed point lose weight, but the frozen configurations do so more slowly up to
$L\approx18$: the small islands that freeze the lattice are local objects, and
the probability that a random configuration produces one before being invaded
decreases only slowly with the area. The transients grow accordingly, with
median lengths of \prelim{19} steps at $L=9$ and \prelim{320} at $L=30$.

The oscillating attractors change character with $L$
(Fig.~\ref{fig:size}b). Traveling waves ($\Tpop=1$, $\Tst=L$) exist at every
size, but their share of the runs peaks around \prelim{$L\approx21$} and then
decreases. Two population periods take over at large sizes:
$\Tpop=24$, which appears for the first time at $L=9$, in the long cycle
$(24,72,72,24,8)$, and is present in every larger size of our sample, and
$\Tpop=80$ (with $\Tst=\TG=80$), which appears at \prelim{$L=19$} and becomes as
frequent as $\Tpop=24$ for $L\ge24$. The first of them is the subject of the
next section.

\begin{figure*}[t]
\centering
\includegraphics[width=\textwidth]{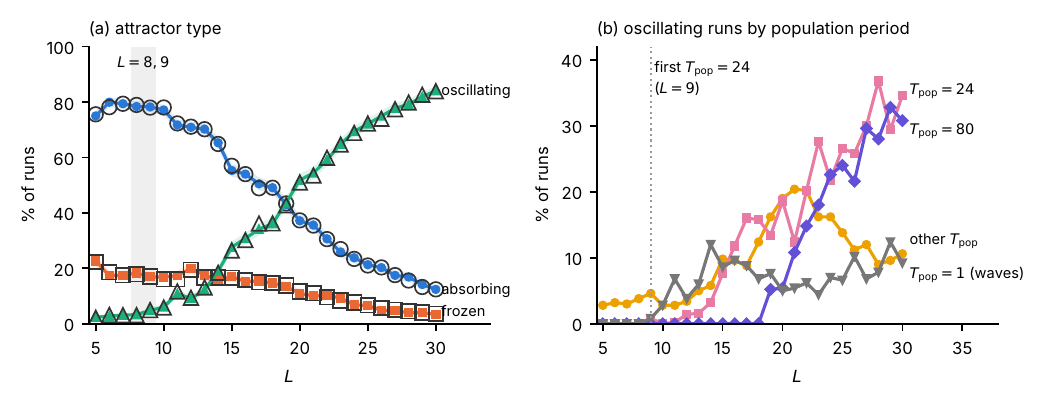}
\caption{Dependence on the lattice size at $\alpha=1/3$, $r=1$,
$P_x=\mathrm{round}(N/3)$. (a) Fraction of runs ending in an absorbing state,
a frozen configuration or an oscillating attractor; lines and filled symbols:
independent Python simulation, \prelim{2000} runs per size (bands: $\pm2$
standard errors); open symbols: all runs of the main simulations for
$L\le9$. The shaded band marks the sizes analyzed in detail in this work.
(b) Fraction of runs ending in an oscillating attractor with a given
population period (\prelim{500} runs per size). The population period 24
appears first at $L=9$ and persists for larger sizes.}
\label{fig:size}
\end{figure*}

\subsection{Intrinsic and lattice-induced periods: the $\Tpop=24$ family}\label{sec:t24}

The attractors with $\Tpop=24$ are the clearest illustration of
Proposition~\ref{prop:factor}, and the reason why the comparison between $L=8$
and $L=9$ is more than a comparison between two sizes: the family is absent at
$L=8$, appears at $L=9$ (\prelim{1.0\%} of the runs) and is found at every larger
size up to $L=30$, where it accounts for \prelim{35\%} of the runs
(Fig.~\ref{fig:t24}f), and beyond, up to $L=40$ (Sec.~\ref{sec:tests}). Across the
22 lattices with $9\le L\le30$ we observed \prelim{82} combinations of size and tuple
with $\Tpop=24$ (\prelim{45} distinct tuples), with state periods between 24 and 696
(Fig.~\ref{fig:t24}e). For every one of them we re-simulated the attractor and
determined the element $h$ of $\Gamma$ that maps the configuration onto itself
after $\TG$ steps (Table~\ref{tab:t24}). The result is remarkably simple.

\paragraph{What does not depend on $L$.} All members of the family have
$\Tpop=24$ and $\TG\in\{8,24\}$, and $h$ is of only four types:
\begin{enumerate}
\item $h=\sigma$, the tuple $(24,24,8,24,8)$: a \emph{stationary} cycle in which,
every 8 steps, the configuration is the previous one with all the colors
advanced by one ($s_{t+8}=\sigma s_t$). From $L=14$ on, in all the attractors that we
re-simulated, every site of the lattice changes state exactly once every 8 steps, so that the whole configuration
behaves as a phase field rotating with period 24: all site periods equal 24 and
every site makes exactly one turn per state period (Sec.~\ref{sec:sites}). This is the dominant member
from $L=15$ on.
\item $h=\mathrm{id}$, the tuple $(24,24,24,24,24)$: a stationary variant without
the color symmetry, much less frequent (\prelim{2--8\%} of the stationary runs for
$L\ge14$).
\item $h=\sigma\circ\tau_v$ with $\TG=8$: a \emph{drifting} cycle that advances
its colors by one and moves by $v$ every 8 steps. For $L\ge11$ (except a few runs
at $L=24$), $v$ is a single lattice step, $(1,0)$ or $(1,1)$; at $L=9$ and 10
it is $(2,0)$.
\item $h=\tau_{(3,3)}$ with $\TG=24$: a drifting variant without the color
symmetry, which moves by three diagonal steps every 24 steps, i.e.\ with the
same velocity as $\sigma\circ\tau_{(1,1)}$.
\end{enumerate}
Two exceptions, each with a few runs, complete the picture: a cycle at $L=13$
whose generator is a glide reflection, and drifting cycles at $L=15$ and $L=18$
with $h=\tau_{(0,1)}$ and $\tau_{(0,2)}$ after 24 steps.

The spatial structure is equally stable. Counting the plaquettes of $2\times2$
sites around which the color winds once in the cyclic order (winding number
$+1$) or in the opposite order ($-1$), every attractor of the family that we
re-simulated contains one pair of opposite defects, or occasionally two or three
pairs. These are the cores
of two counter-rotating spirals, clearly visible at $L=30$ (Fig.~\ref{fig:t24}d)
and still recognizable, squeezed by the boundary conditions, at $L=9$
(Fig.~\ref{fig:t24}a). The same count gives one defect pair for the
$\Tpop=80$ attractors and for most of the long cycles at $L=9$, and none for the
traveling waves, whose winding is around the torus rather than around a point.
The period 24 can be read as the rotation period of the spiral pair, and
$\TG=8$ as the time in which each site advances by one color, i.e.\ one third of
a turn. Why this rotation period takes precisely the value 24
remains an open question. No analytical explanation is available, but the value
is robust: it is the same at every size from $L=9$ to $L=40$, on the rectangular
torus, in the stationary and drifting members, with and without color symmetry,
and in the two independent implementations, which suggests an organizing
mechanism local to the spiral core.

\paragraph{What depends on $L$.} The state period is the only member of the
tuple, apart from $\Tcol$ and $\Tgeo$, which inherit it, that changes with the
lattice, and Proposition~\ref{prop:factor} predicts it exactly. For the
stationary members $m=3$ ($h=\sigma$) or $m=1$, and $\Tst=24$ at every size. For the
generic drift by one lattice step per 8 steps, $h=\sigma\circ\tau_v$ with
$\gcd(L,v_1,v_2)=1$, Eq.~\eqref{eq:order} gives $m=\mathrm{lcm}(3,L)$ and
\begin{equation}
\Tst=8\,\mathrm{lcm}(3,L)=\frac{24\,L}{\gcd(L,3)},
\label{eq:t24}
\end{equation}
which is 72 at $L=9$, 264 at $L=11$, 96 at $L=12$ and 696 at $L=29$; for
$h=\tau_{(3,3)}$, $m=L/\gcd(L,3)$ and Eq.~\eqref{eq:t24} holds again. The relation
$\Tst=\TG\,m$ holds without exception in all \prelim{82} members of the family,
including the exceptional ones (Table~\ref{tab:t24}): the drift by two sites at
$L=10$ gives $m=\mathrm{lcm}(3,5)=15$ and $\Tst=120$ instead of 240, the glide
reflection at $L=13$ has $m=26$ and $\Tst=624$, and so on. Equation~\eqref{eq:t24}
is therefore not a general law but the special case of Eq.~\eqref{eq:order} for
this generator: it accounts for \prelim{98.7\%} of the drifting runs, and the
remaining ones, whose generator is a drift by more than one step, a glide
reflection or a pure translation, follow Eq.~\eqref{eq:order} with their own $h$.

The family therefore shows, in a single object followed across 22 lattice sizes
(32 with those of Sec.~\ref{sec:tests}),
the separation that the tuple of periods is designed to make. $\Tpop=24$, $\TG$,
the type of symmetry and the drift velocity are properties of the pattern; the
state period, which ranges from 24 to 696 for $L\le30$ (and up to 960 at
$L=40$, Sec.~\ref{sec:tests}), is a property of the pattern
\emph{embedded in a particular torus}, and the same pattern is stationary or
drifting depending on whether it fits the lattice. A study based only on $\Tst$
would report a different attractor at every size, and a study based only on
$\Tpop$ would not distinguish the drifting from the stationary members; the
combination of $\TG$, $h$ and Proposition~\ref{prop:factor} identifies them as
one family and explains every value of $\Tst$. The appearance of the family at
$L=9$ and not at $L=8$, and its growth with $L$, suggest that a minimum area is
needed to host the spiral pair: at $L=9$ it is squeezed into a drifting
configuration, and only from $L\approx14$ does it fit in a stationary form.

\begin{figure*}[t]
\centering
\includegraphics[width=\textwidth]{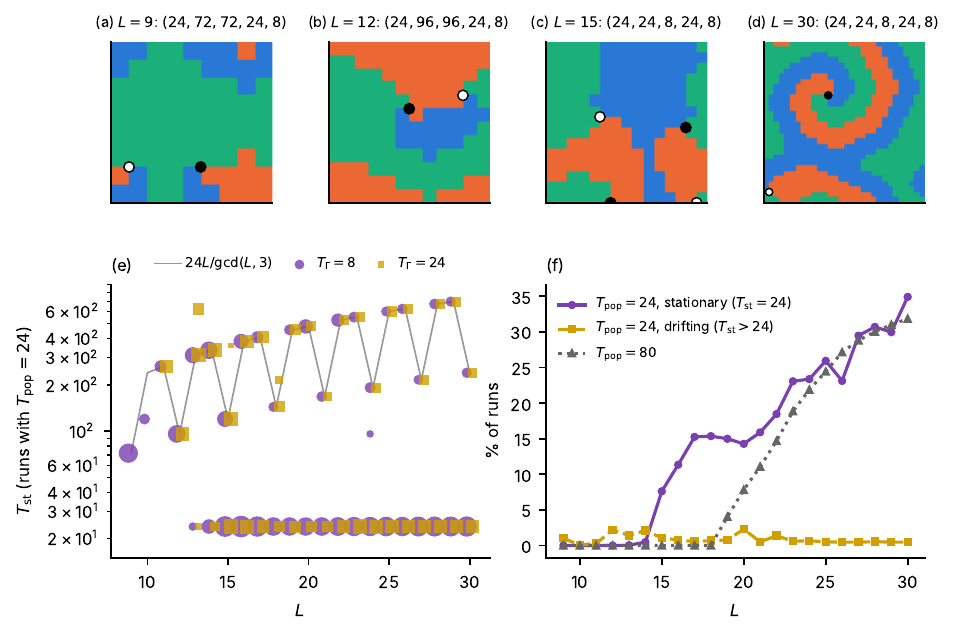}
\caption{The $\Tpop=24$ family ($\alpha=1/3$, $r=1$, $P_x=\mathrm{round}(N/3)$).
(a)--(d) One configuration of a member of the family at $L=9$, 12, 15 and 30; white
and black circles mark the plaquettes with winding number $+1$ and $-1$ (spiral
cores). (a) and (b) drift, (c) and (d) are stationary. (e) State period of all
runs with $\Tpop=24$ as a function of $L$, for $\TG=8$ (circles) and $\TG=24$
(squares); symbol area grows with the number of runs. The gray line is
Eq.~\eqref{eq:t24}, and the points at $\Tst=24$ are the stationary members.
(f) Percentage of runs ending in a stationary or drifting member of the family,
and in an attractor with $\Tpop=80$.}
\label{fig:t24}
\end{figure*}

\begin{table}[t]
\caption{Members of the $\Tpop=24$ family at $\alpha=1/3$, $r=1$ (selection; all runs
pooled). $h$ is the element of $\Gamma$ with $s_{t+\TG}=h\,s_t$ (the simplest one when
several exist; $\tau_v$ is a translation by $v$, $\sigma$ the color rotation) and $m$ the
number of applications of $h$ that return the configuration. In every member of the family
$\Tst=\TG\,m$ (Proposition~\ref{prop:factor}); $\Tpop=24$ and $\TG\in\{8,24\}$ do not
depend on $L$, whereas $m$, and hence $\Tst$, is set by the order of the translation on
the torus.}
\label{tab:t24}
\footnotesize\setlength{\tabcolsep}{3pt}
\begin{ruledtabular}
\begin{tabular}{rlrlrl}
$L$ & $(\Tpop,\Tst,\Tcol,\Tgeo,\TG)$ & Runs & $h$ & $m$ & $\Tst$\\
\colrule
9 & $(24,72,72,24,8)$ & 4545 & $\sigma\circ\tau_{(0,2)}$ & 9 & 72\\
10 & $(24,120,40,24,8)$ & 83 & $\sigma\circ\tau_{(0,2)}$ & 15 & 120\\
11 & $(24,264,88,24,8)$ & 143 & $\sigma\circ\tau_{(1,1)}$ & 33 & 264\\
11 & $(24,264,264,24,24)$ & 164 & $\tau_{(3,3)}$ & 11 & 264\\
12 & $(24,96,96,24,8)$ & 2087 & $\sigma\circ\tau_{(0,1)}$ & 12 & 96\\
12 & $(24,96,96,24,24)$ & 284 & $\tau_{(3,3)}$ & 4 & 96\\
13 & $(24,312,104,24,8)$ & 1020 & $\sigma\circ\tau_{(0,1)}$ & 39 & 312\\
13 & $(24,624,624,24,24)$ & 93 & $\text{refl.}\circ\tau_{(-2,-3)}$ & 26 & 624\\
14 & $(24,24,8,24,8)$ & 477 & $\sigma$ & 3 & 24\\
15 & $(24,360,360,24,24)$ & 2 & $\tau_{(1,0)}$ & 15 & 360\\
18 & $(24,216,216,24,24)$ & 19 & $\tau_{(0,2)}$ & 9 & 216\\
24 & $(24,96,96,24,8)$ & 13 & $\sigma\circ\tau_{(2,0)}$ & 12 & 96\\
30 & $(24,240,240,24,8)$ & 63 & $\sigma\circ\tau_{(1,1)}$ & 30 & 240\\
30 & $(24,240,240,24,24)$ & 43 & $\tau_{(3,3)}$ & 10 & 240\\
30 & $(24,24,8,24,8)$ & 7458 & $\sigma$ & 3 & 24\\
30 & $(24,24,24,24,24)$ & 239 & identity & 1 & 24\\
\end{tabular}
\end{ruledtabular}
\end{table}

\subsection{Larger and rectangular tori}\label{sec:tests}

The interpretation of Sec.~\ref{sec:t24} has two consequences that can be checked
on lattices outside the range $L\le30$ on which it is based. First,
Eq.~\eqref{eq:t24} should give the state period of the drifting members at larger
sizes. Second, a change of the shape of the domain that keeps the local rule
should change $m$, and with it $\Tst$, while $\Tpop$, $\TG$ and the type of $h$
stay the same. We checked both on square tori with $L=31,\dots,40$ and on a
rectangular $9\times12$ torus (Table~\ref{tab:tests}).

\paragraph{Larger square tori.} For $L=31,\dots,40$, Eq.~\eqref{eq:t24} gives
$\Tst=744$, 768, 264, 816, 840, 288, 888, 912, 312 and 960. We ran the simulation
code with $10^4$ random initial conditions per size
(\prelimL{$10^5$} runs in total). All the \prelimL{482} runs that end in a drifting member of the
family have exactly this state period, both those with $\TG=8$ and
$h=\sigma\circ\tau_v$ and those with $\TG=24$ and $h=\tau_{(3,3)}$; there is
no exception of the kind found at $L=10$, 13, 15, 18 and 24. The stationary
members remain the majority, with \prelimL{3--5\%} of them in the variant
$(24,24,24,24,24)$ as for $L\le30$. The trends of Sec.~\ref{sec:size} continue:
fixed points fall from \prelimL{13.8\%} of the runs at $L=31$ to \prelimL{4.7\%} at
$L=40$, the $\Tpop=24$ family grows from \prelimL{39\%} to \prelimL{55\%}, and
attractors with $\Tpop=80$ take a nearly constant \prelimL{29--33\%}.

\paragraph{A rectangular torus.} On an $L_1\times L_2$ torus with $L_1\neq L_2$
the rotations by $\pm90^\circ$ and the diagonal reflections no longer map the
lattice onto itself, and the symmetry group becomes
$\Gamma=(\Z_{L_1}\times\Z_{L_2})\rtimes(\Z_2\times\Z_2)\times C_3$, with the
rotation by $180^\circ$ and the two axial reflections as point group. The
definitions of Sec.~\ref{sec:periods} and Proposition~\ref{prop:factor} apply
unchanged, with the order of a translation now
$\mathrm{lcm}\bigl(L_1/\gcd(L_1,v_1),\,L_2/\gcd(L_2,v_2)\bigr)$. We chose
$9\times12$, which combines the side of the smallest torus that hosts the
family with the side of a torus where it drifts by one site, and adapted the
simulation code accordingly (the geometric periods are searched over the four
point symmetries and the $9\times12$ translations; for $L_1=L_2$ the modified
code gives output identical to the original). We made \prelim{$10^5$} runs and
re-simulated independently up to three runs of each oscillating tuple
(\prelim{43} runs), with full agreement in all five periods.

The rectangle is markedly more oscillatory than square tori of similar area:
\prelim{82.9\%} of the runs end in a fixed point and \prelim{17.1\%} oscillate,
against \prelim{94.1\%} fixed points at $L=10$ ($N=100$) and \prelim{88.3\%} at
$L=11$ ($N=121$). The \prelim{16} oscillating tuples are listed in
Table~\ref{tab:tests}(b). Nine of them also occur on square tori, seven at $L=9$:
the traveling waves along the short side, the long cycles $(21,189,189,21,21)$,
$(23,69,69,23,23)$, $(18,108,108,18,6)$ and $(14,14,14,14,14)$, and the
color-rotating cycle $(18,18,6,18,6)$. Their generators act only along the side of
length 9 (translations $\tau_{(v_1,0)}$, the reflection of the rows, or no
translation at all), so they do not see the second side and keep their $L=9$ tuple.

The $\Tpop=24$ family appears only as $(24,48,48,24,8)$, a tuple found on no
square torus. Its generator is $h=\sigma\circ\tau_{(0,\pm2)}$: the pattern drifts
by two sites every 8 steps, as at $L=9$ and $L=10$, but along the side of length
12. Then $m=\mathrm{lcm}(3,12/\gcd(12,2))=6$ and $\Tst=48$, against $m=9$ and
$\Tst=72$ at $L=9$, while $\Tpop=24$, $\TG=8$ and the type of $h$ are those of
the whole family. A drift along the short side would give $\Tst=72$, but it was
not observed in \prelim{909} runs. The second most frequent oscillating tuple,
$(21,84,84,21,7)$, is the analogous case for $\Tpop=21$: it shares $\Tpop$, $\TG$
and a generator of the form $\sigma\circ\tau_v$ with the $L=9$ cycle
$(21,63,63,21,7)$, but drifts along the long side, so that $m=12$ instead of 9.
The remaining new tuples, with $\Tpop=25$, 20, 42 and 36, have no counterpart in
our square-torus data. The rectangle therefore changes the lattice-induced part
of the tuple and leaves the intrinsic part untouched, which is the separation
of Sec.~\ref{sec:t24}, here obtained by changing the shape of the domain
instead of its size.

\begin{table*}[t]
\caption{Larger and rectangular tori ($\alpha=1/3$, $r=1$, $P_x=\mathrm{round}(N/3)$).
(a) Square tori with $L=31,\dots,40$: percentage of runs ending in a
fixed point (absorbing or frozen), in the $\Tpop=24$ family and in an attractor with
$\Tpop=80$; number of runs ending in a drifting member of the $\Tpop=24$ family, state period
given by Eq.~\eqref{eq:t24}, and number of drifting runs with
that state period. (b) The rectangular $9\times12$ torus: every oscillating tuple, with the
generator $h$ of Proposition~\ref{prop:factor} ($\rho$ a reflection; translations in units of
rows and columns, the second component along the side of length 12), the factor
$m=\Tst/\TG$, and the square tori on which the same tuple occurs. Both panels show that only the lattice-induced part of the tuple ($m$ and $\Tst$) changes with the size and shape of the torus, while $\Tpop$, $\TG$ and the type of $h$ do not.}
\label{tab:tests}
\begin{ruledtabular}
\begin{tabular}{rrrrrrrr}
\multicolumn{8}{l}{(a) Square tori, $L=31$--40}\\
$L$ & Runs & Fixed (\%) & $\Tpop=24$ (\%) & $\Tpop=80$ (\%) & Drifting & Predicted $\Tst$ & Agree\\
\hline
31 & 10\,000 & 13.8 & 39.1 & 33.3 & 48 & 744 & 48\\
32 & 10\,000 & 12.4 & 38.0 & 33.3 & 50 & 768 & 50\\
33 & 10\,000 & 11.2 & 41.3 & 33.3 & 44 & 264 & 44\\
34 & 10\,000 & 10.2 & 43.0 & 31.8 & 49 & 816 & 49\\
35 & 10\,000 & 9.1 & 42.4 & 32.5 & 51 & 840 & 51\\
36 & 10\,000 & 7.4 & 46.5 & 32.0 & 48 & 288 & 48\\
37 & 10\,000 & 6.6 & 51.5 & 30.7 & 57 & 888 & 57\\
38 & 10\,000 & 5.9 & 51.6 & 30.4 & 44 & 912 & 44\\
39 & 10\,000 & 5.4 & 55.1 & 29.4 & 47 & 312 & 47\\
40 & 10\,000 & 4.7 & 54.2 & 29.7 & 44 & 960 & 44\\
\end{tabular}
\end{ruledtabular}
\begin{ruledtabular}
\begin{tabular}{lrlrl}
\multicolumn{5}{l}{(b) Rectangular torus $9\times12$ (\prelim{$10^5$} runs)}\\
Tuple & Runs & $h$ & $m$ & Occurrence on square tori\\
\hline
$(1,12,12,1,1)$ & 8\,992 & $\tau_{(0,1)}$ & 12 & same tuple at $L=12$\\
$(21,84,84,21,7)$ & 3\,408 & $\sigma\circ \tau_{(0,-1)}$ & 12 & new; cf.\ $(21,63,63,21,7)$, $L=9$\\
$(24,48,48,24,8)$ & 909 & $\sigma\circ \tau_{(0,-2)}$ & 6 & new; cf.\ $(24,72,72,24,8)$, $L=9$\\
$(25,75,75,25,25)$ & 813 & $\tau_{(3,0)}$ & 3 & new\\
$(1,12,4,1,1)$ & 801 & $\tau_{(0,-1)}$ & 12 & same tuple at $L=12$\\
$(20,240,240,20,20)$ & 709 & $\rho\circ \tau_{(-4,-1)}$ & 12 & new\\
$(18,18,6,18,6)$ & 627 & $\sigma$ & 3 & same tuple at $L=9, 10, 12, 14$\\
$(42,168,168,42,14)$ & 621 & $\sigma^2\circ \tau_{(0,-1)}$ & 12 & new\\
$(1,9,9,1,1)$ & 162 & $\tau_{(1,0)}$ & 9 & same tuple at $L=9$\\
$(1,9,3,1,1)$ & 25 & $\tau_{(1,0)}$ & 9 & same tuple at $L=9$\\
$(21,189,189,21,21)$ & 16 & $\tau_{(-4,0)}$ & 9 & same tuple at $L=9$\\
$(18,108,108,18,6)$ & 7 & $\sigma\circ \rho\circ \tau_{(1,0)}$ & 18 & same tuple at $L=9$\\
$(23,69,69,23,23)$ & 4 & $\tau_{(-3,0)}$ & 3 & same tuple at $L=9$\\
$(36,36,12,36,12)$ & 2 & $\sigma^2$ & 3 & new\\
$(14,14,14,14,14)$ & 1 & identity & 1 & same tuple at $L=9$\\
$(9,18,6,9,3)$ & 1 & $\sigma^2\circ \rho\circ \tau_{(-4,6)}$ & 6 & same tuple at $L=10$\\
\end{tabular}
\end{ruledtabular}
\end{table*}

\subsection{Periods of individual sites}\label{sec:sites}

The site periods $T_i$ add a local level to the tuple. We computed them for up to
ten final configurations of every oscillating tuple at $\alpha=1/3$, $r=1$, for
$L=5,\dots,40$ and for the $9\times12$ torus (about \prelim{7000} configurations,
re-simulated over one state period). Three results hold without exception. First,
$\mathrm{lcm}_i\,T_i=\Tst$ in every configuration. Second, no site is frozen on
any oscillating attractor: every site completes at least one turn
$\mathrm{X}\to\mathrm{Y}\to\mathrm{Z}\to\mathrm{X}$ per state period. Third, the
oscillation is collective on the small lattices: at $L=8$ and $L=9$ all the sites
of an attractor share the period $\Tst$, except in $(6,12,12,6,6)$ and
$(18,108,108,18,6)$, where a few sites have half of it (Table~\ref{tab:sites} of the Supplemental Material \cite{SM}).
The same holds on the $9\times12$ torus, with one exception, $(36,36,12,36,12)$.

The site periods separate the two spiral families of large lattices. In the
stationary $\Tpop=24$ attractors every site has $T_i=24$ and rotates at
$\bar\omega=1/8$; in the variant without color symmetry some sites near the
cores make two turns instead of one, and at $L=13$ a single core site has
$T_i=6$. In the $\Tpop=80$ attractors every site has $T_i=80$ and makes three or
four turns, with $\bar\omega\approx\prelim{0.115}$, slightly slower than the
$\Tpop=24$ spirals: they are a different local rhythm, not a superposition of
the first one. Finally, the site periods show that many of the long cycles of
large lattices are composite. For $L\ge14$, in \prelim{58\%} of the configurations
of the long cycles with $\Tpop\notin\{1,24,80\}$ (between \prelim{34\%} and
\prelim{74\%} depending on $L$) part of the lattice has $T_i=24$, and in these
configurations it is most of the lattice: \prelim{72\%} of the sites on average
(median \prelim{80\%}), while the remaining sites have the long period $\Tst$.
These attractors therefore consist of a large region locked to the rhythm of the
$\Tpop=24$ spirals and a smaller region whose motion sets the period of the whole
configuration. The
pattern of site periods thus localizes, inside a single attractor, the part of
the lattice responsible for a long state period, something that none of the
global periods can do.

The site periods also show that the tuple is not a complete classification of
the attractors. We call the multiset of pairs $(T_i,\nu_i)$ over the sites the
\emph{site-period spectrum}; since sites related by a symmetry of $\Gamma$ have the
same period and the same number of turns, it is an invariant of the attractor up
to symmetry, and it refines the tuple without adding a new global period (its
only scalar summary, $\mathrm{lcm}_i T_i$, is $\Tst$ itself). On the lattices of
the main study the refinement is almost empty: at $L=8$ and $L=9$ only one tuple,
$(6,12,12,6,6)$ at $L=8$, contains configurations with different spectra (all 64
sites with $T_i=12$, or four of them with $T_i=6$). On large lattices it is
substantial: for $L\ge14$, \prelim{419} of the \prelim{1096} pairs $(L,\text{tuple})$ contain
configurations with different spectra (between \prelim{17\%} and \prelim{60\%} of the
tuples depending on $L$), mostly long cycles such as $(168,168,56,168,56)$ or
$(72,72,72,72,72)$, whose sampled configurations often all have different spectra.
Since at most ten configurations per tuple were examined, these numbers are lower
bounds. On small lattices the tuple identifies each long cycle up to symmetry
(with the few exceptions of Table~\ref{tab:long} of the Supplemental Material), while on large lattices it
identifies a family of attractors with the same periods, which the site-period
spectrum resolves further. The spectrum does not separate everything either: all
traveling waves of a given size have $T_i=L$ and one turn per site, whatever their
band widths.

\subsection{Dependence on the other parameters}\label{sec:params}

The threshold, radius and composition sweeps confirm the picture expected from
the analysis of Fisch, Gravner and Griffeath (Sec.~\ref{sec:model}); the
details are given in Sec.~\ref{sec:S-params} of the Supplemental Material
\cite{SM}. For $\alpha\le1/4$ ($\theta\le2$) almost every run ends in a uniform
rotation, for $\alpha\ge1/2$ ($\theta\ge4$) more than \prelim{99.5\%} of the runs
freeze and no oscillating attractor is observed, and $\alpha=3/8$ gives the same
frequencies and tuples as $\alpha=1/3$, so that the whole repertoire of
attractors is confined to $\theta=3$. Increasing the radius shifts the balance
from takeover to frozen coexistence and leaves only traveling waves, with jagged
fronts, among the oscillating attractors. Oscillating attractors appear only
close to the balanced composition and are most frequent at
$P_x=\mathrm{round}(N/3)$: coexistence of the three states requires that none
of them is initially in a clear minority.

\section{Discussion}
\label{sec:discussion}

The main message of this work is that the period of a periodic attractor is
not one number but a small family of numbers, one for each level of description,
and that the symmetry group of the dynamics organizes this family in two ways.
It imposes exact divisibility relations, so that the ratios between the periods
identify the space--color symmetry that generates the cycle; and, through the
factorization $\Tst=\TG\,m$ of Proposition~\ref{prop:factor}, it separates what
belongs to the pattern ($\TG$, $\Tpop$ and the type of symmetry) from what is
induced by the finite domain ($m$, and with it $\Tst$, $\Tcol$ and $\Tgeo$).

\paragraph{What each period reveals} The population period, the only one
accessible from aggregated data, is blind to rigid motion: traveling waves, the
majority of the oscillating attractors on small lattices, have $\Tpop=1$, and in
the drifting cycles $\Tpop=\Tgeo$ underestimates the microscopic period by the
order of the translation (a factor between 2 and 9 at $L\le9$, and up to 40 for the drifting
$\Tpop=24$ cycles of the largest lattices). Since
$\Tst/\Tcol\in\{1,3\}$, $\Tcol$ is a binary indicator of color-rotation
symmetry; $\Tst/\Tgeo$ is the order of the geometric transformation, which
explains why long cycles have periods that are multiples of 3 and 9 at $L=9$ and
of 4 at $L=8$; and $\TG$ reveals symmetries that neither $\Tcol$ nor $\Tgeo$
detect, such as the color-rotation-plus-translation motif of 8 steps in the
72-step cycle. In the catalogue of $L\le9$ the pair $(\Tcol,\Tgeo)$ already
separates all the observed tuples, while on larger lattices $\TG$ is also needed,
since drifting cycles with $\TG=8$ and $\TG=24$ can share the other four periods
(Sec.~\ref{sec:S-further} of the Supplemental Material \cite{SM}). The
divisibility relations of Corollary~\ref{cor:lattice} are also a useful
consistency check for simulation codes: any violation signals an error in the
period detection.

\paragraph{Intrinsic and lattice-induced periods} The $\Tpop=24$ family
(Sec.~\ref{sec:t24}) is, in our view, the most important physical result of
this work. One object, a pair of counter-rotating spiral defects, appears at
$L=9$ and is found at every larger size up to $L=40$. Its population period,
its symmetry period, the type of its generating symmetry and its drift velocity
do not depend on $L$, whereas its state period takes values from 24 to 960, all
given by Eq.~\eqref{eq:order}; on a $9\times12$ torus the family drifts along
the long side with $\Tst=48$, a value found on no square torus. The tuple thus
provides an operational way to separate the intrinsic properties of a pattern
from those induced by the domain, a distinction usually made by inspecting
snapshots. The same separation should apply to any equivariant dynamics on a
finite domain with periodic boundary conditions: a pattern with an intrinsic
period and a velocity commensurate with the lattice acquires a state period
equal to its intrinsic one times the order of its translation, and comparing the
tuple across sizes tests whether two attractors are the same object. The site
periods (Sec.~\ref{sec:sites}) complete the picture at the local level: the
stationary spirals rotate as a single phase field, many long cycles of large
lattices are composite, with most of the lattice locked to the rhythm of the
spirals, and the site-period spectrum separates attractors that share a tuple,
which on large lattices is frequent.

\paragraph{Dynamics and diversity} Cyclic threshold dynamics on a small torus
almost always destroys local diversity: one state takes over or two states
freeze side by side, and the three states coexist only in the 2--5\% of runs at
$L\le9$ that end in an oscillating attractor. In larger lattices the picture is
reversed: oscillating attractors become the majority (\prelim{84\%} of the runs
at $L=30$) and the spiral attractors with $\Tpop=24$ and $\Tpop=80$ overtake
the bands, as the analogy with spiral waves in excitable media suggests
\cite{Fisch1991a,Reichenbach2007}. The symmetry analysis connects these
observations with diversity: the diversity counts depend only on the intrinsic
dynamics, a color symmetry of the cycle guarantees equal average populations,
and the spiral defects guarantee three-state neighborhoods at their cores, so
that on large lattices local diversity is a topological property of the
attractor, carried by a number of sites that does not grow with the area.

\paragraph{Relation to excitable media} The attractors that dominate large lattices are
the discrete counterpart of the spiral waves of excitable and oscillatory media,
from the Greenberg--Hastings automaton \cite{Greenberg1978} to the
Belousov--Zhabotinsky reaction \cite{Winfree1972,Kapral1995}. In those systems,
too, the measured period of a confined spiral mixes its intrinsic rotation with
the effect of the boundaries, and drifting or meandering spirals are relative
periodic orbits of the Euclidean group \cite{Barkley1994}. The symmetry-resolved
periods offer a simple way to separate the two in discrete simulations: $\TG$
and the type of drift characterize the spiral, whereas $\Tst$ also depends on how
the drift closes on the domain. Since the construction uses only the
equivariance of the dynamics, it applies without change to other cellular
automata and coupled-map lattices with the same lattice and color symmetries,
and, with a continuous group, to spatiotemporal data from simulations of
reaction--diffusion models; whether it is also useful for experimental image
sequences, where symmetries hold only approximately, is an open question.

\paragraph{Relation to previous work} Earlier results on the periodicity of
cyclic automata bound the period of a site or of the whole system in terms of
the number of colors or of the cycle structure of the underlying graph
\cite{Fisch1991b,Matamala1997,Matamala2004,Doba2018}. The tuple of periods is
complementary: rather than bounding one period, it decomposes it, through the
symmetry group of the model, into a part due to the motion of the pattern
($\Tst/\Tgeo$), a part due to the cyclic exchange of the species
($\Tst/\Tcol$) and an elementary motif ($\TG$), and connects it with what is
observable in aggregated data ($\Tpop$).

\paragraph{Limitations and outlook} The catalogue of tuples is complete for
practical purposes at $L=8$ and $L=9$, but not for $L\ge16$; for $L\ge17$
about \prelim{0.01--0.07\%} of new runs are expected to end in a tuple not yet
observed (Sec.~\ref{sec:S-stats} of the Supplemental Material). Open questions
are the relation between the spiral pairs of the $\Tpop=24$ and $\Tpop=80$
attractors and the spirals of the cyclic cellular automaton on infinite lattices
\cite{Fisch1991b}, the minimum size needed to host them, the number of distinct
attractors per tuple on large lattices, and the extension of the analysis to
asymmetric invasion rates, for which the color group $C_3$ is no longer a
symmetry, and to irregular networks, where the geometric group is replaced by the
automorphism group of the graph. Finally, since every run records its initial condition together with the
tuple of its attractor, the data set allows one to pose the prediction of the
asymptotic behavior from the initial configuration as a supervised learning
problem, which we leave for future work.

The main outcome of this work is therefore not the catalogue of observed periods
but the recognition that periodicity itself is a layered concept in cyclic
cellular automata. Once the symmetries are taken into account, several
apparently unrelated time scales become manifestations of a common structure: a
short intrinsic motif, repeated with a drift whose order is set by the lattice.

\appendix
\section{Simulation protocol and verification}
\label{app:study}

This appendix summarizes how the periods were measured and checked. The pooling
of the sweeps, the estimates of unobserved tuples, further results and the
simulations of larger lattices are given in the Supplemental Material
\cite{SM}.

\subsection{Simulation protocol}

Each run of the simulation code (written in R with the igraph library)
consists of the following steps.
\begin{enumerate}
\item \emph{Initial condition.} A string of $N$ symbols with exactly $P_x$ X,
$P_x$ Y and $N-2P_x$ Z is shuffled uniformly at random. This string is stored
in the output (column \texttt{ci}), which makes every run reproducible.
\item \emph{Update.} The configuration is updated synchronously with
Eq.~\eqref{eq:rule}. The neighborhood is built as the Moore graph of radius $r$
on the torus with multiple edges removed, so that each site has
$\min\{(2r+1)^2,N\}-1$ distinct neighbors; for $L=9$, $r=4$ the neighborhood is
the whole torus.
\item \emph{Period detection.} After every step the code appends to ten time
series their current value: the configuration, the three populations, the three
diversity counts, the configuration without color, the vector of local
diversities and the vector of changes. For each series it keeps, for every lag
$p$, the length $M_p$ of the current run of coincidences $x_k=x_{k-p}$. The
detected period is the smallest $p\le n/3$ for which the periodic tail contains
at least $2p+1$ values, and the transient is the index at which that tail
starts. This is equivalent to an exhaustive search for the shortest periodic
tail, but costs $O(n)$ operations per step.
\item \emph{Stopping rule.} A run stops when (a) the lattice is uniform or
frozen, or (b) the configuration, the populations and the diversity counts have
a detected period, in both cases only after the population and diversity
periods have been detected for at least ten consecutive steps. The population
period of each state is the largest period detected during those steps, and
$\Tpop$ is the largest of the three (which equals their least common multiple in
all our attractors).
\item \emph{Symmetry periods.} Once the run has stopped, $\Tgeo$ is obtained by
comparing the last configuration with the $\Tst$ previous ones under the $8L^2$
elements of $G$, and $\TG$ by doing the same under the $24L^2$ elements of
$\Gamma$; the images of a configuration under all the lattice transformations
are generated at once from precomputed permutations of the site indices. $\Tcol$
is the period of the series of configurations without color.
\item \emph{Output.} One line per run is appended to a results file, with the parameters, the initial and final
configurations, the final populations and diversity counts, the transient
length, all the periods and the group elements found at $\Tgeo$ and $\TG$.
\end{enumerate}
The requirement of $2p+1$ values and ten confirmations makes the code run a few
steps beyond the moment at which the attractor is first reached: in the runs
checked below the median overhead is \prelim{10} steps and the maximum
\prelim{367}, for the longest cycles.

\subsection{Data set and pooling}

The runs were organized in series of up to $10^4$ realizations per parameter
value, about $10^5$ per value in total (Table~\ref{tab:sweeps}), all with
independent initial conditions; the data set contains about \prelim{$10^7$}
runs, \prelim{$3.0\times10^6$} of them at the reference point. For a subset of
\prelim{$2.5\times10^5$} runs, $\TG$ was obtained by re-simulating the final
configuration. Runs at the reference point produced by different sweeps are
statistically equivalent ($\chi^2$ test of homogeneity, $p=\prelim{0.32}$ at
$L=8$ and $p=\prelim{0.23}$ at $L=9$; Table~\ref{tab:series} of the
Supplemental Material) and are pooled.

\subsection{Independent implementation and verification}

An independent implementation in Python/NumPy was written for the checks and for
the larger lattices. It does not detect periods from time series: it stores
every configuration of a trajectory in a hash table and stops at the first
repetition, which gives the transient $\tau$ and $\Tst$ exactly. The remaining
periods are then computed from one full cycle: $\Tpop$ as the least common
multiple of the periods of the three populations, $\Tcol$ both with the cyclic
color shifts $C_3$ and with the first-appearance relabelling of the R code, and
$\Tgeo$ and $\TG$ by testing all the elements of $G$ and $\Gamma$.

Three checks were made. (i) \emph{Complete runs.} We re-simulated
\prelim{$2.4\times10^4$} runs from their recorded initial conditions: all the
oscillating runs at the reference point in a set of \prelim{$1.2\times10^5$}
runs (\prelim{2498} at $L=8$ and \prelim{1978} at $L=9$) and \prelim{$10^4$}
randomly chosen non-oscillating runs per size. The class of the attractor, the
five periods and the transient length agree in \prelim{100\%} of them. The
\prelim{570} distinct oscillating cycles reached in this sample give the same
$\Tcol$ with $C_3$ and with the first-appearance relabelling, as anticipated in
the Remark of Sec.~\ref{sec:periods}. (ii) \emph{Final states.} Re-simulating
the final configuration of \prelim{$9.7\times10^4$} further runs reproduced
$\Tst$, $\Tpop$, $\Tcol$ and $\Tgeo$ in all cases, and the recorded $\TG$ agreed
with the re-simulated one in all the \prelim{1463} oscillating attractors
tested. (iii) \emph{Divisibility.} The relations (a)--(c) of
Corollary~\ref{cor:lattice} hold in all the \prelim{$1.2\times10^6$} oscillating runs of
the data set. The relation $\Tdiv\mid\TG$ fails in \prelim{21} runs, all in the
$L$ sweep at $L\ge15$. Nineteen of them are on the same attractor,
$(192,192,64,192,64)$, reached at $L=15$ (fifteen runs), 16 (three) and 26: the series of the
diversity count $n_3$ on this cycle contains a stretch that repeats with period
14 for more than 28 steps, which the detector took as the period, whereas the
exact period of $n_3$, obtained by re-simulation, is 64. The other two, with
recorded $n_3$ periods of 22 (on $(120,120,120,120,120)$ at $L=20$) and 31 (on
a cycle with $\Tpop=1632$ at $L=28$), have the same origin: on long cycles the
diversity series can contain a quasi-periodic stretch long enough to pass the
detection criterion. These are the only measurement errors that we found, and
they affect only the diversity periods, never the five periods of the tuple.

The median transients are \prelim{15}, \prelim{10} and \prelim{9} steps for
absorbing, frozen and oscillating runs at $L=8$, and \prelim{20}, \prelim{13}
and \prelim{13} at $L=9$ (Table~\ref{tab:census} gives the medians over all
runs); the longest transient in the re-simulated sample is \prelim{70} steps.
These short transients are what makes exact sampling possible.

\section*{Data availability}
The simulation code (written in R) and the Python scripts used for the
verification, the tables and the figures are available from the authors upon
reasonable request.

\bibliographystyle{apsrev4-2}
\bibliography{refs}

\clearpage
\onecolumngrid
\makeatletter\renewcommand{\fps@table}{!htbp}\renewcommand{\fps@figure}{!htbp}\makeatother
\setcounter{figure}{0}\setcounter{table}{0}\setcounter{equation}{0}
\renewcommand{\thetable}{S\arabic{table}}
\renewcommand{\thefigure}{S\arabic{figure}}
\renewcommand{\theequation}{S\arabic{equation}}
\newcounter{Ssec}\renewcommand{\theSsec}{S\arabic{Ssec}}
\newcommand{\Ssection}[2]{\refstepcounter{Ssec}\label{#2}\section*{\theSsec.\quad #1}}
\begin{center}{\large\bfseries Supplemental Material}\end{center}
\begin{quotation}\small
This Supplemental Material contains the table of long cycles and their
space--time diagrams, the complete catalogue of oscillating tuples, the periods
of individual sites, the dependence on the threshold, radius and initial
composition, the pooling of the parameter sweeps, the estimates of the
unobserved part of the catalogue of tuples with the sample statistics for every
lattice size, further results on the relation between aggregated and microscopic
periods, and the comparison with the independent simulations of larger lattices. Equations, figures, tables and
sections without the prefix S refer to the main text.
\end{quotation}

\Ssection{Long cycles and space--time diagrams}{sec:S-long}

Table~\ref{tab:long} lists the long cycles found at $L\le9$ (Sec.~\ref{sec:long}), and Fig.~\ref{fig:spacetime} shows space--time diagrams of a traveling wave and of two drifting long cycles.

\begin{table*}[!htbp]
\centering
\caption{Long cycles at $\alpha=1/3$, $r=1$ (all runs pooled).
``Runs'' counts all runs; ``Distinct'' is the number of different cycles up to
$\Gamma$ and time shifts; ``drift'' the transformation after $\Tgeo$ steps;
populations give the range of $n_c(t)$ over the cycle; $\tau_{1/2}$ is the
median transient.}
\label{tab:long}
\small
\begin{tabular}{rrrrrlrr}
\toprule
$L$ & $\Tst$ & Runs & Distinct & $\Tgeo$ & Drift & Populations & $\tau_{1/2}$\\
\midrule
6 & 8 & 13 & 4 & 4 & $(0,3)$ & 6--17 & 4\\
6 & 9 & 1 & 1 & 9 & none (color rotation) & 10--14 & 5\\
6 & 12 & 34 & 2 & 6 & $(3,3)$ + color rotation & 10--14 & 7.5\\
6 & 60 & 12 & 1 & 15 & rotation + translation & 11--13 & 5\\
\addlinespace
7 & 9 & 2 & 1 & 9 & none (color rotation) & 14--21 & 6.5\\
7 & 12 & 1 & 1 & 3 & rotation + $(3,2)$ & 14--19 & 8\\
7 & 12 & 2 & 1 & 12 & none (color rotation) & 15--18 & 4.5\\
7 & 60 & 4 & 1 & 15 & rotation + translation & 15--18 & 7\\
7 & 147 & 1 & 1 & 21 & $(0,2)$ + color rotation & 7--30 & 8\\
\addlinespace
8 & 12 & 11 & 2 & 6 & $(0,4)$ + color rotation & 16--24 & 11\\
8 & 12 & 32 & 5 & 6 & 4 sites & 11--35 & 16\\
8 & 16 & 239 & 2 & 4 & rotation + $(4,4)$ & 16--24 & 15\\
8 & 88 & 95 & 1 & 22 & 2 sites & 8--40 & 13\\
\addlinespace
9 & 14 & 20 & 1 & 14 & none & 13--36 & 28.5\\
9 & 18 & 1715 & 2 & 18 & none (color rotation) & 20--37 & 36\\
9 & 54 & 39 & 1 & 18 & 3 sites & 14--45 & 12\\
9 & 63 & 19 & 1 & 21 & $(3,-3)$ & 8--53 & 17\\
9 & 69 & 582 & 1 & 23 & 3 sites & 10--47 & 20\\
9 & 72 & 4545 & 1 & 24 & 3 sites & 10--56 & 33\\
9 & 108 & 189 & 1 & 18 & reflection + $(3,3)$ & 14--45 & 17\\
9 & 189 & 1271 & 1 & 21 & 4 sites & 12--53 & 26\\
\bottomrule
\end{tabular}
\end{table*}

\begin{figure*}[!htbp]
\centering
\includegraphics[width=\textwidth]{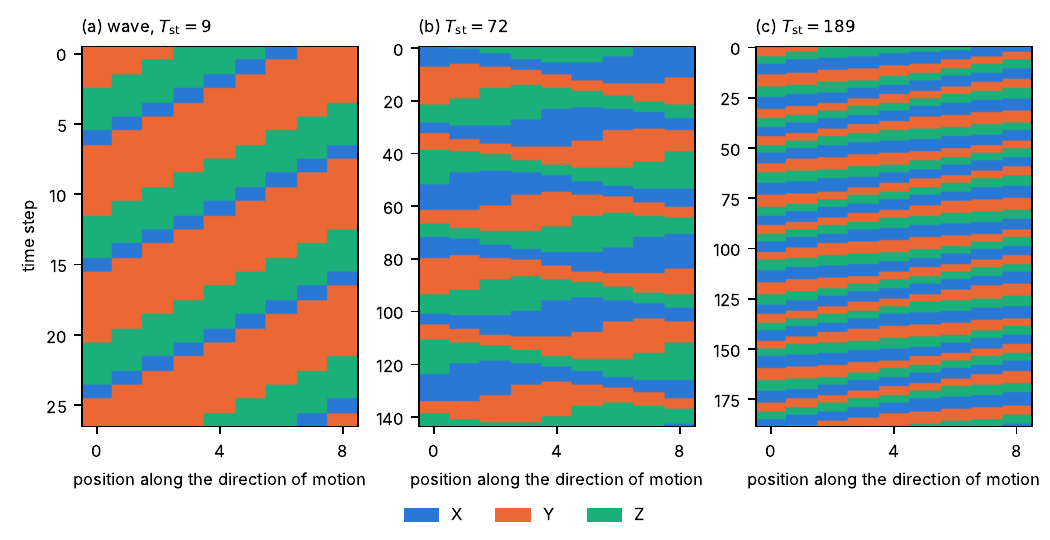}
\caption{Space--time diagrams of one line of sites taken along the direction
of motion. (a) Traveling wave ($L=9$, $r=1$, bands 4-3-2): the fronts advance
one site per step and the pattern returns after $\Tst=L=9$ steps.
(b) Long cycle with $\Tst=72$: the pattern drifts three sites every 24 steps,
i.e.\ $1/8$ site per step. (c) Long cycle with $\Tst=189$: four sites every 21
steps. In (b) and (c) the dominant state changes cyclically
X$\to$Y$\to$Z while the pattern drifts slowly.}
\label{fig:spacetime}
\end{figure*}

\Ssection{Catalogue of attractors and site periods}{sec:S-cat}

Table~\ref{tab:catalogue} lists every oscillating tuple found at $\alpha=1/3$ with $L\le9$ (all radii), with the generator $h$ of Proposition~\ref{prop:factor}; Table~\ref{tab:sites} gives the periods of individual sites discussed in Sec.~\ref{sec:sites}.

\begin{table*}[!htbp]
\caption{Catalogue of the oscillating attractors found at $\alpha=1/3$ from
random initial conditions with $P_x=\mathrm{round}(N/3)$ (all runs pooled). The next-to-last column gives an element of $\Gamma$ that maps
$s_t$ onto $s_{t+\TG}$ (translations in lattice units, given by their shortest
representative up to sign; $\sigma$ is the color rotation
X$\to$Y$\to$Z$\to$X; ``rot.'' a rotation of the square). ``Runs'' is the
number of realizations ending in that tuple; the corresponding fractions of all
runs are given for $L=8,9$ in Table~\ref{tab:basins}. The case $L=9$, $r=4$, in which
the neighborhood covers the whole torus and every run ends in a uniform
rotation, is omitted. Every tuple satisfies $\Tst=\TG\,m$ with the listed generator, and the ratios between its periods identify the type of drift.}
\label{tab:catalogue}
\footnotesize
\begin{ruledtabular}
\begin{tabular}{rrlrrrrrlr}
$L$ & $r$ & Class & $\Tpop$ & $\Tst$ & $\Tcol$ & $\Tgeo$ & $\TG$ & Generator at $\TG$ & Runs\\
\colrule
5 & 1 & traveling wave & 1 & 5 & 5 & 1 & 1 & transl. $(1,0)$ & \prelim{2765} \\[2pt]
6 & 1 & traveling wave & 1 & 6 & 6 & 1 & 1 & transl. $(1,0)$ & \prelim{2730} \\
6 & 1 & wave (equal bands) & 1 & 6 & 2 & 1 & 1 & transl. $(1,0)$ & \prelim{1325} \\
6 & 1 & color-rotating & 6 & 12 & 4 & 6 & 2 & $\sigma^2\circ$transl. $(3,3)$ & \prelim{34} \\
6 & 1 & color-rotating & 9 & 9 & 3 & 9 & 3 & $\sigma$ & \prelim{1} \\
6 & 1 & color-rotating & 15 & 60 & 20 & 15 & 5 & $\sigma^2\circ$rot.\,$90^\circ\circ$transl. $(2,1)$ & \prelim{12} \\
6 & 1 & geometric & 4 & 8 & 8 & 4 & 4 & transl. $(0,3)$ & \prelim{13} \\[2pt]
7 & 1 & traveling wave & 1 & 7 & 7 & 1 & 1 & transl. $(1,0)$ & \prelim{3751} \\
7 & 1 & color-rotating & 9 & 9 & 3 & 9 & 3 & $\sigma$ & \prelim{2} \\
7 & 1 & color-rotating & 12 & 12 & 4 & 12 & 4 & $\sigma$ & \prelim{2} \\
7 & 1 & color-rotating & 15 & 60 & 20 & 15 & 5 & $\sigma^2\circ$rot.\,$90^\circ\circ$transl. $(3,3)$ & \prelim{4} \\
7 & 1 & color-rotating & 21 & 147 & 49 & 21 & 7 & $\sigma\circ$transl. $(0,2)$ & \prelim{1} \\
7 & 1 & geometric & 3 & 12 & 12 & 3 & 3 & rot.\,$90^\circ\circ$transl. $(3,2)$ & \prelim{1} \\[2pt]
8 & 1 & traveling wave & 1 & 8 & 8 & 1 & 1 & transl. $(1,0)$ & \prelim{13\,801} \\
8 & 1 & geometric & 1 & 16 & 16 & 4 & 4 & rot.\,$90^\circ\circ$transl. $(4,4)$ & \prelim{239} \\
8 & 1 & geometric & 22 & 88 & 88 & 22 & 22 & transl. $(0,2)$ & \prelim{95} \\
8 & 1 & geometric & 6 & 12 & 12 & 6 & 6 & transl. $(4,0)$ & \prelim{32} \\
8 & 1 & color-rotating & 6 & 12 & 4 & 6 & 2 & $\sigma^2\circ$transl. $(0,4)$ & \prelim{11} \\
8 & 2 & traveling wave & 1 & 8 & 8 & 1 & 1 & transl. $(1,0)$ & \prelim{2146} \\
8 & 2 & wave (breathing front) & 1 & 8 & 8 & 2 & 2 & transl. $(2,0)$ & \prelim{1} \\[2pt]
9 & 1 & traveling wave & 1 & 9 & 9 & 1 & 1 & transl. $(1,0)$ & \prelim{12\,067} \\
9 & 1 & geometric & 24 & 72 & 72 & 24 & 8 & $\sigma\circ$transl. $(2,0)$ & \prelim{4545} \\
9 & 1 & color-rotating & 18 & 18 & 6 & 18 & 6 & $\sigma$ & \prelim{1715} \\
9 & 1 & wave (equal bands) & 1 & 9 & 3 & 1 & 1 & transl. $(1,0)$ & \prelim{1795} \\
9 & 1 & geometric & 21 & 189 & 189 & 21 & 21 & transl. $(0,4)$ & \prelim{1271} \\
9 & 1 & geometric & 23 & 69 & 69 & 23 & 23 & transl. $(3,0)$ & \prelim{582} \\
9 & 1 & geometric & 18 & 108 & 108 & 18 & 6 & $\sigma\circ$refl.$\circ$transl. $(3,1)$ & \prelim{189} \\
9 & 1 & geometric & 18 & 54 & 54 & 18 & 6 & $\sigma\circ$transl. $(0,1)$ & \prelim{39} \\
9 & 1 & asymmetric & 14 & 14 & 14 & 14 & 14 & identity & \prelim{20} \\
9 & 1 & geometric & 21 & 63 & 63 & 21 & 7 & $\sigma\circ$transl. $(1,-1)$ & \prelim{19} \\
9 & 2 & traveling wave & 1 & 9 & 9 & 1 & 1 & transl. $(1,0)$ & \prelim{3294} \\
9 & 2 & wave (equal bands) & 1 & 9 & 3 & 1 & 1 & transl. $(1,0)$ & \prelim{133} \\
9 & 3 & wave (equal bands) & 1 & 9 & 3 & 1 & 1 & transl. $(1,0)$ & \prelim{79} \\
\end{tabular}
\end{ruledtabular}
\end{table*}

\begin{table}[!htbp]
\caption{Periods of individual sites on the oscillating attractors ($\alpha=1/3$, $r=1$; up to ten
re-simulated final configurations per tuple and size). ``Site periods'' lists the values of
$T_i$ found (with the percentage of sites when there is more than one); ``Turns'' is the number
of complete cycles $\mathrm{X}\to\mathrm{Y}\to\mathrm{Z}\to\mathrm{X}$ made by a site in one
state period, and $\bar\omega$ the mean number of color changes per site and step. No site is
frozen on any oscillating attractor, and $\mathrm{lcm}_i\,T_i=\Tst$ in every configuration.}
\label{tab:sites}
\begin{ruledtabular}
\begin{tabular}{rlllr}
$L$ & Tuple & Site periods & Turns & $\bar\omega$\\
\hline
8 & $(1,16,16,4,4)$ & 16 & 1 & 0.188\\
8 & $(1,8,8,1,1)$ & 8 & 1 & 0.375\\
8 & $(22,88,88,22,22)$ & 88 & 4--7 & 0.213\\
8 & $(6,12,12,6,6)$ & 6 (4\%), 12 (96\%) & 1--2 & 0.263\\
8 & $(6,12,4,6,2)$ & 12 & 1 & 0.250\\
9 & $(1,9,3,1,1)$ & 9 & 1 & 0.333\\
9 & $(1,9,9,1,1)$ & 9 & 1 & 0.333\\
9 & $(14,14,14,14,14)$ & 14 & 1 & 0.214\\
9 & $(18,108,108,18,6)$ & 54 (11\%), 108 (89\%) & 6--8 & 0.210\\
9 & $(18,18,6,18,6)$ & 18 & 1 & 0.167\\
9 & $(18,54,54,18,6)$ & 54 & 3--4 & 0.210\\
9 & $(21,189,189,21,21)$ & 189 & 9--13 & 0.182\\
9 & $(21,63,63,21,7)$ & 63 & 3--4 & 0.180\\
9 & $(23,69,69,23,23)$ & 69 & 3--5 & 0.198\\
9 & $(24,72,72,24,8)$ & 72 & 3--5 & 0.171\\
\hline
$14$--$40$ & $(24,24,8,24,8)$ & 24 & 1 & 0.125\\
$14$--$40$ & $(24,24,24,24,24)$ & 24 & 1--2 & 0.126\\
$19$--$40$ & $(80,80,80,80,80)$ & 80 & 3--4 & 0.116\\
\end{tabular}
\end{ruledtabular}
\end{table}

\Ssection{Dependence on the other parameters}{sec:S-params}

\paragraph{Threshold.} For $r=1$ only $\lceil 8\alpha\rceil$ matters, and the
sweep over $\alpha=m/8$ confirms the thresholds expected from the analysis of
Fisch, Gravner and Griffeath (Sec.~\ref{sec:model}). For
$\alpha\le1/8$ ($\theta\le1$) every run ends in a uniform rotation,
$(\Tpop,3,1,3,1)$, with $\Tpop=1$ only when the three populations are equal; for
$\alpha=1/4$ ($\theta=2$) uniform rotations still take \prelim{99.2\%} of the runs
at $L=8$ and \prelim{99.7\%} at $L=9$, the rest being traveling waves and a few absorbing states.
The value $\alpha=3/8$ defines the same rule as $\alpha=1/3$, and the two
sweeps give the same frequencies within error (at $L=9$:
\prelim{77.8\%}, \prelim{17.2\%} and \prelim{5.0\%} of absorbing, frozen and
oscillating runs for $\alpha=3/8$, against \prelim{78.1\%}, \prelim{17.0\%} and
\prelim{4.9\%} for $\alpha=1/3$), and the same tuples; we therefore pool them in
Tables~\ref{tab:census} and \ref{tab:basins}. For $\alpha\ge1/2$ ($\theta\ge4$) no front can
advance: \prelim{more than 99.5\%} of the runs freeze, for $\alpha\ge5/8$
already in the initial configuration, and no oscillating attractor was observed. The
whole repertoire of attractors is therefore confined to the single threshold
$\theta=3$.

\paragraph{Radius.} Increasing $r$ shifts the balance from takeover to frozen
two-state coexistence (Table~\ref{tab:census}): at $L=9$ the frozen fraction
goes from \prelim{17\%} ($r=1$) to \prelim{39\%} ($r=2$) and \prelim{51\%} ($r=3$),
and the oscillating fraction falls from \prelim{4.9\%} to \prelim{3.0\%} and
\prelim{0.1\%}. For $r\ge2$ all oscillating attractors are traveling waves,
although with jagged fronts. For $L=9$ and $r=4$ the neighborhood covers
the whole torus, every site sees the global composition, and every run ends in a
uniform rotation.

\paragraph{Initial composition.} With $n_\mathrm{X}(0)=n_\mathrm{Y}(0)=P_x$ and
the rest in Z, the outcome depends strongly on $P_x$. For $P_x=0$ the lattice is
uniform, and for $P_x=1,2$ every run freezes (a few isolated X and Y sites cannot
invade the Z sea, and are invaded at most partially). From $P_x=3$ the absorbing fraction grows quickly, up to more than
\prelim{97\%} around $P_x\approx9$--$14$, and then decreases again towards the balanced composition, where
the frozen fraction recovers to about \prelim{17--20\%}. Oscillating attractors
appear only close to the balanced composition: they exceed \prelim{0.1\%} of the
runs at $L=8$ for $P_x\ge18$ and at $L=9$ for $P_x\ge22$ (with $N/3=21.3$ and
$27$, respectively; isolated cases occur from $P_x=15$), and their frequency
grows monotonically up to $P_x=\mathrm{round}(N/3)$ (\prelim{3.2\%} and
\prelim{4.9\%}). Coexistence of the three states in an oscillating attractor
therefore requires that none of them is initially in a clear minority.

\Ssection{Pooling of the sweeps}{sec:S-pool}

The runs were organized in series of up to $10^4$ realizations per parameter
value, about $10^5$ per value in total (Table~\ref{tab:sweeps}), all with independent initial conditions; we
checked by comparing the recorded initial conditions that the output files of
different series do not overlap. Repeated initial conditions occur only for
$P_x\le2$, where the number of possible configurations is small, and are kept as
genuine samples. The size of the data set is given in Appendix~\ref{app:study}.

Runs at the reference point produced by different sweeps ($r=1$ in the radius
sweep, $P_x=\mathrm{round}(N/3)$ in the composition sweep, $L=8,9$ in the size
sweep, and $\alpha=3/8$ in the threshold sweep, which is the same rule) are
statistically equivalent and are pooled. Table~\ref{tab:series} supports this:
the fractions of absorbing, frozen and oscillating runs obtained in the four
sweeps agree within their statistical errors ($\chi^2$ test of homogeneity,
$p=\prelim{0.32}$ at $L=8$ and $p=\prelim{0.23}$ at $L=9$). Moreover, the
threshold and radius sweeps, with more than $10^5$ runs each, find independently
exactly the same \prelim{7} tuples at $L=8$ and \prelim{12} at $L=9$, and the
smaller sweeps find a subset of them.

\begin{table}[!htbp]
\caption{Fate of the runs at the reference point in each of the sweeps that
contain it (percentages of the runs of each sweep; for the threshold sweep, the
runs at $\alpha=3/8$). The sweeps are statistically indistinguishable, which
justifies pooling them.}
\label{tab:series}
\footnotesize
\begin{ruledtabular}
\begin{tabular}{llrrrr}
$L$ & Sweep & Runs ($10^3$) & Absorbing & Frozen & Oscillating\\
\colrule
8 & radius $r$      & \prelim{123} & \prelim{78.9} & \prelim{18.0} & \prelim{3.1}\\
  & threshold $\alpha$ & \prelim{111} & \prelim{79.1} & \prelim{17.9} & \prelim{3.0}\\
  & composition $P_x$ & \prelim{110}  & \prelim{79.1} & \prelim{17.8} & \prelim{3.2}\\
  & size $L$        & \prelim{111} & \prelim{78.8} & \prelim{18.0} & \prelim{3.1}\\
\colrule
9 & radius $r$      & \prelim{122} & \prelim{78.0} & \prelim{17.1} & \prelim{4.9}\\
  & threshold $\alpha$ & \prelim{111} & \prelim{77.8} & \prelim{17.2} & \prelim{5.0}\\
  & composition $P_x$ & \prelim{110}  & \prelim{78.0} & \prelim{17.1} & \prelim{4.9}\\
  & size $L$        & \prelim{109} & \prelim{78.2} & \prelim{16.9} & \prelim{4.9}\\
\end{tabular}
\end{ruledtabular}
\end{table}

\Ssection{Statistical treatment and unseen tuples}{sec:S-stats}

\paragraph{Basin estimates.} Because the initial conditions are drawn uniformly
from the configurations with the prescribed composition, each run is a
Bernoulli trial for every tuple, and the fraction $\hat p$ of runs ending in a
tuple estimates the relative size $p$ of its basin with binomial standard error
$\sqrt{\hat p(1-\hat p)/n}$ (the SE column of Table~\ref{tab:basins}). With
$n\approx\prelim{4.5\times10^5}$ at each size the error
is below \prelim{0.08} percentage points for the fixed points and below
\prelim{0.03} for every oscillating tuple. Figure~\ref{fig:discovery}b shows that
the estimate of the oscillating fraction is stable once a few thousand runs have
been accumulated.

\paragraph{Could new tuples appear?} The number of simulations is necessarily
finite, so the catalogue of tuples could miss attractors with very small basins.
This is the classical problem of estimating the unseen part of a population from
a sample, and we address it with three standard tools, which use only the
numbers $f_k$ of tuples observed exactly $k$ times among the $n$ runs.
\begin{enumerate}
\item The \emph{Good--Turing} estimator gives the total probability of all the
tuples not yet observed, i.e.\ the probability that the next run ends in a new
tuple, as $\hat P_0=f_1/n$ \cite{Good1953}. Tuples seen only once are the
signature of a population that has not been sampled to exhaustion.
\item The \emph{Chao1} estimator gives a lower bound for the total number of
tuples, $\hat S_\mathrm{Chao1}=S+f_1^2/(2f_2)$, where $S$ is the number of
tuples observed (with $f_1(f_1-1)/2$ in place of $f_1^2/(2f_2)$ when $f_2=0$)
\cite{Chao1984}.
\item The \emph{Good--Toulmin} estimator gives the expected number of new
tuples in $m$ further runs,
\[\hat U(m)=\sum_{k\ge1}(-1)^{k+1}\left(\frac{m}{n}\right)^k f_k,\]
which is reliable for $m\le n$ \cite{Good1956}.
\end{enumerate}
In addition, an individual tuple with basin $p$ is observed at least once with
probability $1-(1-p)^n$, so that any tuple with $p\ge3/n$ would have been seen
with probability of at least 95\%.

Table~\ref{tab:Lsweep} gives the result for every size. At $L=8$ and $L=9$ no
tuple has been observed only once: the rarest ones, $(6,12,4,6,2)$ at $L=8$
and $(21,63,63,21,7)$ at $L=9$, have \prelim{11} and \prelim{19} runs. Hence
$\hat P_0=0$, $\hat S_\mathrm{Chao1}=S$ (\prelim{7} and \prelim{12} tuples), and
$\hat U=0$ for the next $n/10$ runs; any tuple missing from the catalogue has a
basin smaller than about \prelim{$7\times10^{-6}$} of the configuration space,
and cannot change the frequencies of Table~\ref{tab:basins} beyond their
statistical error. The same holds for the values of $\Tpop$ alone: no new
population period is expected at $L=8$ (values 1, 6, 22) or $L=9$ (values 1, 14,
18, 21, 23, 24). The discovery curves of Fig.~\ref{fig:discovery}a show how this
saturation was reached: with $10^4$ runs, about \prelim{5} of the \prelim{7}
tuples of $L=8$ and \prelim{10} of the \prelim{12} of $L=9$ had appeared, and the
complete list required a median of \prelim{$3.2\times10^4$} runs at both sizes over random orderings of the data, about a
tenth of the runs available. At $L=5$--$7$, with more than $10^5$ runs
each, only one or two singletons remain at $L=6$ and $L=7$
($\hat P_0\approx10^{-5}$); the rare color-rotating cycles of $L=7$ in
Table~\ref{tab:catalogue} were all found in the last series.

The $L$ sweep has at least $10^5$ runs per size up to $L=16$, and there the
estimates are also small: $f_1=0$ at $L=10$ and 11, and $\hat P_0\approx1\times10^{-5}$,
$2\times10^{-5}$, $3\times10^{-5}$ and $8\times10^{-6}$ at $L=12$, 13, 14 and 15
(\prelim{$1.2\times10^5$} runs at $L=15$). At $L=16$, with the same number of runs, seven tuples
are still singletons and $\hat P_0\approx6\times10^{-5}$. The situation is different for $L\ge17$, with
\prelim{$2.2\times10^4$} runs per size (\prelim{$5.1\times10^4$} for $L=17$). There singletons are common, and the Good--Turing
estimate is that \prelim{0.01--0.07\%} of further runs would end in a tuple not
yet observed, growing with $L$; the Chao1 estimates exceed the observed number of
tuples by up to a factor of \prelim{1.7} ($L=24$). For $\Tpop$ alone the
estimates are similar. The frequencies of the main classes in
Table~\ref{tab:Lsweep} are reliable, but the catalogue of tuples at $L\ge16$ is
not complete, and the rare long periods listed in Sec.~\ref{sec:S-further} are a
sample of a larger set.

\begin{figure}[!htbp]
\centering
\includegraphics[width=\columnwidth]{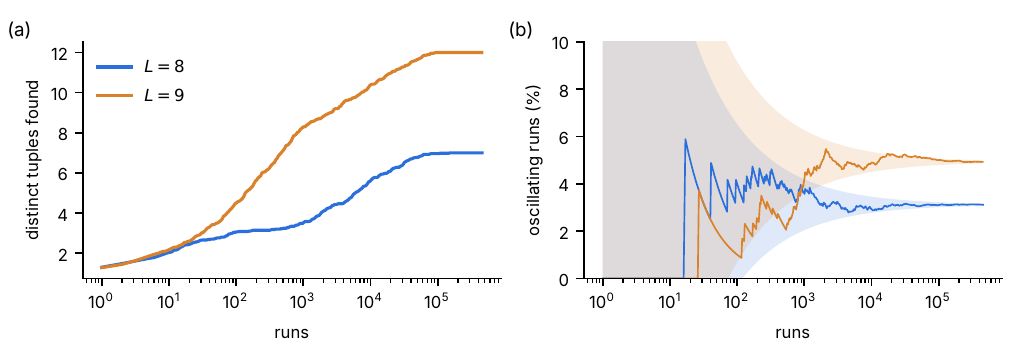}
\caption{Convergence of the sampling at the reference point ($\alpha=1/3$ or
$3/8$, $r=1$, $P_x=\mathrm{round}(N/3)$; all runs pooled).
(a) Number of distinct tuples found as a function of the number of runs, averaged
over 50 random orderings of the runs. (b) Running estimate of the percentage of
oscillating runs for one random ordering; the bands are the 95\% binomial
intervals around the final value.}
\label{fig:discovery}
\end{figure}

\begin{table*}[!htbp]
\caption{Main simulations at the reference point as a function of $L$ ($\alpha=1/3$
or $3/8$, $r=1$, $P_x=\mathrm{round}(N/3)$; all runs pooled).
Fixed: absorbing plus frozen runs; wave: traveling waves; $\Tpop=24$, $\Tpop=80$:
runs with that population period; $\tau_{1/2}$: median transient. The last
columns give the sample-coverage statistics of Sec.~\ref{sec:S-stats}: $S$ is
the number of distinct tuples observed (including the trivial tuple of the fixed
points, counted separately for absorbing and frozen runs), $f_1$ and $f_2$ the
numbers of tuples observed exactly once and twice, $\hat P_0=f_1/n$ the
Good--Turing estimate of the probability that a new run ends in an unobserved
tuple, and $\hat S_\mathrm{Chao1}$ the Chao1 estimate of the total number of
tuples (``--'' when $f_1=0$, in which case $\hat S_\mathrm{Chao1}=S$). The last
three columns, $S_\mathrm{pop}$, $f_{1,\mathrm{pop}}$ and
$\hat S_\mathrm{Chao1,pop}$, are the same statistics computed for the values of
$\Tpop$ alone instead of the complete tuples.}
\label{tab:Lsweep}
\footnotesize\setlength{\tabcolsep}{3.5pt}
\begin{ruledtabular}
\begin{tabular}{rrrrrrrrrrrrrrr}
 & & \multicolumn{4}{c}{\% of runs} & & \multicolumn{5}{c}{complete tuples} & \multicolumn{3}{c}{values of $\Tpop$ only}\\
\cline{3-6}\cline{8-12}\cline{13-15}
$L$ & $n$ ($10^3$) & fixed & wave & $\Tpop{=}24$ & $\Tpop{=}80$ & $\tau_{1/2}$ & $S$ & $f_1$ & $f_2$ & $\hat P_0$ (\%) & $\hat S_\mathrm{Chao1}$ & $S_\mathrm{pop}$ & $f_{1,\mathrm{pop}}$ & $\hat S_\mathrm{Chao1,pop}$\\
\colrule
5 & 129 & 97.9 & 2.1 & 0.0 & 0.0 & 5 & 3 & 0 & 0 & 0 & -- & 1 & 0 & --\\
6 & 129 & 96.8 & 3.2 & 0.0 & 0.0 & 8 & 8 & 1 & 0 & 0.001 & 8 & 5 & 1 & 5\\
7 & 121 & 96.9 & 3.1 & 0.0 & 0.0 & 11 & 8 & 2 & 2 & 0.002 & 9 & 6 & 2 & 7\\
8 & 456 & 96.9 & 3.0 & 0.0 & 0.0 & 14 & 7 & 0 & 0 & 0 & -- & 3 & 0 & --\\
9 & 452 & 95.1 & 3.1 & 1.0 & 0.0 & 19 & 12 & 0 & 0 & 0 & -- & 6 & 0 & --\\
10 & 109 & 94.1 & 3.1 & 0.1 & 0.0 & 25 & 14 & 0 & 1 & 0 & -- & 12 & 0 & --\\
11 & 109 & 88.3 & 3.3 & 0.3 & 0.0 & 33 & 13 & 0 & 2 & 0 & -- & 8 & 0 & --\\
12 & 109 & 90.6 & 3.3 & 2.2 & 0.0 & 44 & 19 & 1 & 2 & 0.001 & 19 & 14 & 0 & --\\
13 & 109 & 86.9 & 4.0 & 1.4 & 0.0 & 62 & 19 & 2 & 2 & 0.002 & 20 & 11 & 2 & 12\\
14 & 109 & 81.6 & 5.2 & 2.6 & 0.0 & 78 & 37 & 3 & 5 & 0.003 & 38 & 25 & 2 & 26\\
15 & 122 & 73.6 & 8.1 & 8.7 & 0.0 & 96 & 25 & 1 & 2 & 0.001 & 25 & 13 & 1 & 13\\
16 & 122 & 69.8 & 8.8 & 12.1 & 0.0 & 116 & 45 & 7 & 1 & 0.006 & 70 & 31 & 4 & 35\\
17 & 51 & 63.8 & 9.4 & 15.8 & 0.0 & 135 & 35 & 7 & 2 & 0.01 & 47 & 27 & 6 & 36\\
18 & 22 & 63.8 & 12.5 & 16.0 & 0.0 & 151 & 33 & 6 & 3 & 0.03 & 39 & 22 & 5 & 28\\
19 & 22 & 57.2 & 15.7 & 15.8 & 4.0 & 168 & 30 & 7 & 2 & 0.03 & 42 & 21 & 5 & 27\\
20 & 22 & 49.0 & 18.5 & 16.6 & 7.8 & 178 & 36 & 9 & 2 & 0.04 & 56 & 27 & 9 & 68\\
21 & 22 & 46.5 & 19.8 & 16.4 & 11.1 & 195 & 42 & 9 & 4 & 0.04 & 52 & 30 & 9 & 50\\
22 & 22 & 40.1 & 18.4 & 19.8 & 14.7 & 206 & 36 & 4 & 5 & 0.02 & 38 & 25 & 4 & 28\\
23 & 22 & 35.3 & 16.5 & 23.7 & 18.8 & 215 & 38 & 6 & 4 & 0.03 & 42 & 26 & 4 & 28\\
24 & 22 & 31.1 & 15.2 & 24.0 & 21.9 & 224 & 42 & 11 & 2 & 0.05 & 72 & 25 & 5 & 31\\
25 & 22 & 27.7 & 14.8 & 26.4 & 24.4 & 239 & 46 & 12 & 6 & 0.05 & 58 & 33 & 8 & 39\\
26 & 22 & 26.0 & 13.0 & 23.6 & 27.1 & 247 & 39 & 9 & 2 & 0.04 & 59 & 30 & 8 & 46\\
27 & 22 & 22.6 & 11.9 & 29.9 & 28.8 & 265 & 44 & 11 & 4 & 0.05 & 59 & 31 & 8 & 39\\
28 & 22 & 20.2 & 10.4 & 31.2 & 30.0 & 282 & 52 & 15 & 7 & 0.07 & 68 & 36 & 12 & 72\\
29 & 22 & 17.3 & 10.1 & 30.4 & 31.0 & 296 & 46 & 11 & 6 & 0.05 & 56 & 33 & 8 & 41\\
30 & 22 & 16.1 & 8.9 & 35.4 & 31.8 & 319 & 57 & 16 & 7 & 0.07 & 75 & 38 & 11 & 58\\
\end{tabular}
\end{ruledtabular}
\end{table*}

\Ssection{Further results}{sec:S-further}

\paragraph{Aggregated versus microscopic periods.} The population period is the
only one that can be measured from census data, so it is natural to ask how much
of the tuple it determines. Pooling all sizes and radii at the reference
threshold, the oscillating runs fall into \prelim{384} distinct tuples. Among
them there is no pair that differs only in $\Tpop$: whenever $\Tst$, $\Tcol$,
$\Tgeo$ and $\TG$ coincide, $\Tpop$ coincides too, so that in our data $\Tpop$
is a function of the microscopic periods. The same holds for $\Tgeo$. The
converse is far from true. A single value of $\Tpop$ can hide many different
attractors: $\Tpop=1$ is shared by \prelim{38} tuples (the waves of all sizes,
with and without equal bands, and the rotated cycle $(1,16,16,4,4)$),
$\Tpop=24$ by \prelim{45} (the stationary and drifting cycles of Sec.~\ref{sec:t24}),
and $\Tpop=45$, 96, 99 and 120 by \prelim{23}, \prelim{20}, \prelim{16} and
\prelim{21}. Even
within one size the census misses structure: at $L=9$, $\Tpop=18$ corresponds
to three different attractors, with $\Tst=18$, 54 and 108, and $\Tpop=21$ to two,
with $\Tst=63$ and 189 (Table~\ref{tab:basins}). Pairs of tuples that differ in
a single period exist only for $\Tcol$ (e.g.\ waves with and without equal bands),
for $\Tst$ (equal-band waves at sizes $L$ and $3L$) and for $\TG$. The last case is
the most instructive: for $\Tpop=24$ and $L=12$, 15, 18, \dots, 30 the
drifting cycles occur with $\TG=8$ and with $\TG=24$ while all the other periods
coincide, e.g.\ $(24,96,96,24,8)$ and $(24,96,96,24,24)$ at $L=12$; only $\TG$
reveals that the first one has a hidden color--translation symmetry.

\paragraph{The family with $\Tpop=24$.} The population period 24 appears at
every size from $L=9$ to $L=30$ (Table~\ref{tab:Lsweep}) and up to $L=40$
(Sec.~\ref{sec:tests}); Sec.~\ref{sec:t24}
analyzes it in detail. From $L=15$ on, most of these attractors are stationary,
$(24,24,8,24,8)$, with a minority of the variant $(24,24,24,24,24)$. The state
period of the drifting members follows Eq.~\eqref{eq:t24},
$\Tst=24L/\gcd(L,3)$, in \prelim{98.7\%} of the drifting runs; the exceptions are
the runs at $L=10$, all with $\Tst=120$, a variant at $L=13$ with twice that value
(\prelim{93} runs), a few runs at $L=18$ and $L=24$ with three halves and two thirds
of it, and two runs at $L=15$ with $\Tst=360$. All of them, like the regular
members, satisfy $\Tst=\TG\,m$ with the generators of Table~\ref{tab:t24}; they
differ only in the translation per period. The defect counts and the generators
were obtained by re-simulating up to three distinct final configurations of each
of the \prelim{82} members of the family (pairs of size and tuple) and up to six for the defect statistics. The second large-size family,
$\Tpop=80$, appears from $L=19$ and always with the tuple $(80,80,80,80,80)$, i.e.\
as an asymmetric cycle ($h=\mathrm{id}$); it also contains one pair of opposite
point defects.

\Ssection{Simulations on larger lattices}{sec:S-larger}

To reduce the errors of the $L$ sweep we also ran the independent
implementation with \prelim{2000} realizations per size for $L=5,\dots,30$ at
the reference point, recording the class of the attractor, the transient and
$\Tst$, and a second set of \prelim{500} realizations per size that also records
$\Tpop$ and $\Tcol$. Exact detection is still feasible at $L=30$, where the median
transient in these runs is about \prelim{310} steps, because only one trajectory is stored at a
time. The two codes agree: at $L=30$ the fraction of runs ending in a fixed point
is \prelim{16.1\%} in the main simulations (\prelim{$2.2\times10^4$} runs) and \prelim{15.9\%}
in the independent ones (2000 runs), and at $L=21$ it is \prelim{46.5\%} and
\prelim{45.6\%}, a difference of \prelim{0.8} combined standard errors. Both sets of
simulations are shown in Fig.~\ref{fig:size}.

\end{document}